\newtheorem{theorem}{Theorem}[section]
\newtheorem{cor}[theorem]{Corollary}
\newtheorem{lemma}[theorem]{Lemma}
\newtheorem{proposition}[theorem]{Proposition}
\theoremstyle{definition}
\numberwithin{equation}{subsection}
\theoremstyle{plain}
\newtheorem{conjecture}{Conjecture}
\newtheorem{problem}{Problem}
\def\Z{\mathbb Z}
\newcommand{\thmref}[1]{Theorem~\ref{#1}}
\newcommand{\lemref}[1]{Lemma~\ref{#1}}
\newcommand{\eqnref}[1]{~{\textrm(\ref{#1})}}
\numberwithin{equation}{section}
\begin{document}
\title{Palindromic Automorphisms of  Free Groups}
\author{Valeriy G. Bardakov}
\author{Krishnendu Gongopadhyay}
\author{Mahender Singh}
\date{\today}
\address{Sobolev Institute of Mathematics and Novosibirsk State University, Novosibirsk 630090, Russia}
\address{Laboratory of Quantum Topology, Chelyabinsk State University, Brat'ev Kashirinykh street 129, Chelyabinsk 454001, Russia.}
\email{bardakov@math.nsc.ru}
\address{Indian Institute of Science Education and Research (IISER) Mohali, Sector 81,  S. A. S. Nagar, P. O. Manauli, Punjab 140306, India.}
\email{krishnendu@iisermohali.ac.in}
\address{Indian Institute of Science Education and Research (IISER) Mohali, Sector 81,  S. A. S. Nagar, P. O. Manauli, Punjab 140306, India.}
\email{mahender@iisermohali.ac.in}

\subjclass[2010]{Primary 20F28; Secondary 20E36, 20E05}
\keywords{Free group; palindromic automorphism; representation; residual nilpotence, Torelli group}

\begin{abstract}
Let $F_n$ be the free group of rank $n$ with free basis $X=\{x_1,\dots,x_n \}$. A palindrome is a word in $X^{\pm 1}$ that reads the same backwards
as forwards. The palindromic automorphism group $\Pi A_n$ of $F_n$ consists of those automorphisms that map each $x_i$ to a palindrome.
In this paper, we investigate linear representations of $\Pi A_n$, and prove that $\Pi A_2$ is linear. We obtain conjugacy classes of involutions in $\Pi A_2$, and investigate residual nilpotency of $\Pi A_n$ and some of its subgroups. Let $IA_n$ be the group of those automorphisms of $F_n$ that act trivially on the abelianisation, $P I_n$ be the palindromic Torelli group of $F_n$, and $E \Pi A_n$ be the elementary palindromic automorphism group of $F_n$. We prove that $PI_n=IA_n \cap E \Pi A_n'$. This result strengthens a recent result of Fullarton \cite{Fullarton}.
\end{abstract}
\maketitle

\section{Introduction}
Let $F_n$ be the free group of rank $n$ with free basis $X=\{x_1,\dots,x_n \}$, and let $Aut(F_n)$ be the automorphism group of $F_n$. A reduced word $w=x_1^{\epsilon_1}\dots x_n^{\epsilon_n}$ in $X^{\pm1}$ is called a palindrome if it is equal to its reverse word  $\overline{w}=x_n^{\epsilon_n} \dots x_1^{\epsilon_1}$. In \cite{Collins}, Collins defined the palindromic automorphism group $\Pi A_n$  as the subgroup of $Aut(F_n)$ consisting of those automorphisms that map each $x_i$ to a palindrome. He proved that $\Pi A_n$ is finitely presented, and that it is generated by the following three types of automorphisms
$$
t_{i} : \left\{
\begin{array}{ll}
x_i \longmapsto x_i^{-1} &  \\
x_k \longmapsto x_k &  \textrm{for}~k \neq i,
\end{array} \right.
$$

$$
\alpha_{i, i+1} : \left\{
\begin{array}{ll}
x_i \longmapsto x_{i+1} &  \\
x_{i+1} \longmapsto x_i & \\
x_k \longmapsto x_k & \textrm{for}~ k \neq i,
\end{array} \right.
$$

$$
\mu_{ij} : \left\{
\begin{array}{ll}
x_i \longmapsto x_jx_ix_j & \textrm{for}~i \neq j \\
x_k \longmapsto x_k & \textrm{for}~k \neq i.
\end{array} \right.
$$

The group $$E\Pi A_n= \langle \mu_{ij}~|~1 \leq i \neq j \leq n\rangle$$ is called the elementary palindromic automorphism group of $F_n$, and
the group $$ES_n= \langle t_i, \alpha_{j,j+1} ~|~\ 1 \leq i \leq n, 1 \leq j \leq n-1 \rangle$$ is called the extended symmetric group. In \cite{Collins}, Collins showed that $$\Pi A_n \cong E\Pi A_n \rtimes ES_n$$ for $n \geq 2$. Here, $ES_n$ acts on $E \Pi A_n$ by conjugation given by the following rules
$$t_i \mu_{ij} t_i=\mu_{ij}^{-1}, ~ t_j \mu_{ij} t_j=\mu_{ij}^{-1},$$
$$t_k \mu_{ij} t_k=\mu_{ij}~\textrm{for}~k \neq i, j,$$
$$\alpha \mu_{ij} \alpha=\mu_{\alpha(i) \alpha(j)}~\textrm{for}~\alpha \in \{\alpha_{1,2}, \alpha_{2,3}, \dots, \alpha_{n-1, n} \}.$$

In \cite{Collins}, Collins also showed that a set of defining relations for $E\Pi A_n$ is
$$\mu_{ij}\mu_{kl}= \mu_{kl}\mu_{ij},$$
$$\mu_{ik}\mu_{jk}= \mu_{jk}\mu_{ik},$$
$$\mu_{ik}\mu_{jk}\mu_{ij}= \mu_{ij}\mu_{jk}\mu_{ik}^{-1}.$$

In the same paper, Collins conjectured that $E\Pi A_n$ is torsion free for each $n \geq 2$. Using geometric techniques, Glover and Jensen  \cite{Glover} proved this conjecture and also calculated the virtual cohomological dimension of $\Pi A_n$. Using methods from logic theory, Piggott and Ruane \cite{pr} constructed Markov languages of normal forms for $\Pi A_n$. The pure palindromic automorphism group is defined as
$$P \Pi A_n= \langle \mu_{ij}, t_k~|~1 \leq i \neq j \leq n~\textrm{and}~1 \leq k \leq n \rangle.$$
Fairly recently, Jensen, McCammond and Meier \cite{jemm} computed the Euler characteristic of $\Pi A_n$, $P \Pi A_n$ and $E \Pi A_n$. Nekritsuhin \cite{ne2, ne3} has computed the center of $\Pi A_n$ using combinatorics of words in $F_n$. The palindromic Torelli group $P I_n$ is the subgroup of $E \Pi A_n$ consisting of those automorphisms that act trivially on the abelianization of $F_n$. The group $P I_n$ is a free group analogue of the hyperelliptic Torelli group of the mapping class group of an oriented surface. In his PhD Thesis \cite{Fullarton1}, Fullarton has obtained a normal generating set for $P I_n$ by constructing a simplicial complex on which $P I_n$ acts in a nice manner. In this paper, we investigate some fundamental group theoretic questions about $\Pi A_n$ generalising some results of Nekritsuhin and strengthening a result of Fullarton.

Throughout the paper, we use standard notation and convention. All functions are evaluated from left to right. If $G$ is a group, then $G'$ denotes the commutator subgroup of $G$, $Z(G)$ denotes the center of $G$, and $\gamma_n(G)$ denotes the $n$th term in the lower central series of $G$. Given two elements $g$ and $h$ in $G$, we denote the element $h^{-1}gh$ by $g^h$ and the commutator $g^{-1}h^{-1}gh$ by $[g,h]$.

First, in Section \ref{sec2}, we determine the center of $\Pi A_n$. In \cite{ne}, Nekritsuhin observed that there are five involutions in $\Pi A_2$ that are not conjugate to each other. It is natural to ask for the precise number of conjugacy classes of involutions of $\Pi A_2$.  In Section \ref{sec3}, we show that any involution in $\Pi A_2$ is conjugate to one of the five involutions in Nekritsuhin's list. In Section \ref{sec4}, we consider the generalization of Sanov representation that Nekritsuhin constructed in \cite{ne}.  We show that the Nekritsuhin representation is faithful. A precise description of the image is also obtained. We also prove that $\Pi A_2$ is linear. In Section \ref{sec5}, we prove that $\Pi A_n$ is not residually nilpotent  for $n \geq 3$. Also, we construct a  subgroup $\widetilde{ U_n}$ of $E \Pi A_n$ that is residually nilpotent for $n=3$. Finally, in Section \ref{sec6}, we prove that  $P I_n= IA_n \cap E \Pi A_n'$. This result strengthens a recent result of Fullarton \cite{Fullarton}.  
\bigskip

\section{Center of $\Pi A_n$}\label{sec2}
In this section, we compute the center of $\Pi A_n$ for all $n \geq 2$. In \cite{ne2}, Nekritsuhin computed the center of $\Pi A_2$ using combinatorics of words in $F_2$. We generalised this result and computed the center of $\Pi A_n$ for $n \geq 3$. But, when this paper was being written, we came to know that Nekritsuhin has also proved this result in \cite{ne3}. We present a proof here for the sake of completeness.

\begin{theorem}\label{center} 
The center of  $\Pi A_n$ is the cyclic group of order two generated by the automorphism
$$t: x_i \mapsto x_i^{-1} ~\textrm{for all}~ 1 \leq i\leq  n.$$
\end{theorem}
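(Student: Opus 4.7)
The plan is to first verify that $t := t_1 t_2 \cdots t_n$ lies in $Z(\Pi A_n)$, and then show conversely that any central element equals either the identity or $t$.

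For the forward direction, I use the decomposition $\Pi A_n \cong E\Pi A_n \rtimes ES_n$ and check commutation with each generator. The $t_i$'s commute pairwise, so $t$ commutes with each $t_i$. For $\alpha_{j,j+1}$, both $t\alpha_{j,j+1}$ and $\alpha_{j,j+1} t$ send $x_k$ to $x_{\alpha_{j,j+1}(k)}^{-1}$, hence agree. For $\mu_{ij}$, the relations $t_i \mu_{ij} t_i = \mu_{ij}^{-1} = t_j \mu_{ij} t_j$ together with $t_k \mu_{ij} t_k = \mu_{ij}$ for $k \notin \{i,j\}$ show that conjugation by $t$ inverts $\mu_{ij}$ twice, leaving it fixed.

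For the converse, let $\phi \in Z(\Pi A_n)$ and set $\phi(x_j) = w_j$, where each $w_j$ is a reduced palindrome. Evaluating $\phi t_i = t_i \phi$ at $x_j$ with $i \neq j$ gives $w_j = t_i(w_j)$ in $F_n$. The key observation is that $t_i$ preserves reducedness (it merely swaps $x_i$ and $x_i^{-1}$, which cannot create or destroy cancelling adjacencies), so this identity holds if and only if the reduced word $w_j$ contains no occurrence of $x_i^{\pm 1}$. Letting $i$ range over all indices distinct from $j$, I conclude that $w_j$ is a reduced word in $x_j$ alone, hence $w_j = x_j^{k_j}$ for some $k_j \in \Z$ (automatically a palindrome).

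Since $\phi$ is an automorphism of $F_n$, the induced map on the abelianisation $\Z^n$ is the diagonal matrix $\mathrm{diag}(k_1, \ldots, k_n) \in GL_n(\Z)$, which forces each $k_j \in \{\pm 1\}$; write $w_j = x_j^{\epsilon_j}$. Finally, $\phi \alpha_{i,i+1} = \alpha_{i,i+1} \phi$ applied to $x_i$ yields $x_{i+1}^{\epsilon_{i+1}} = x_{i+1}^{\epsilon_i}$, so all the $\epsilon_j$ coincide and $\phi \in \{\mathrm{id}, t\}$. No step is a serious obstacle; the only mildly subtle point is the reducedness property of $t_i$, which upgrades the group-theoretic identity $w_j = t_i(w_j)$ into the combinatorial statement that $x_i$ does not appear in $w_j$. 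Note also that commutation with the $\mu_{ij}$ is not needed for the converse; the constraints coming from the finite subgroup $ES_n$ alone pin $\phi$ down.
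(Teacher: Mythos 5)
Your proposal is correct, and its overall skeleton matches the paper's: verify $t$ is central, then use commutation with the generators of $ES_n$ to force $(x_j)\phi=x_j^{\pm1}$ with a common sign. The execution of the key step differs, though. The paper writes $(x_i)\phi$ as an alternating product of powers of $x_i$ and blocks $u_{i_r}$ in the remaining generators, and combines the palindrome symmetry $u_{i_1}=\overline{u}_{i_k}$ with the relation $(x_i)\phi t_i=(x_i)t_i\phi$ \emph{at the same index} $i$ to kill the blocks one by one ("continuing this way"). You instead evaluate $\phi t_i=t_i\phi$ at $x_j$ for $i\neq j$ and invoke the combinatorial fact that $t_i$ preserves reduced words, so a reduced word fixed by $t_i$ contains no occurrence of $x_i^{\pm1}$; this kills all foreign letters in $(x_j)\phi$ in one stroke, needs no induction, and does not even use that $(x_j)\phi$ is a palindrome. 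Your final step also replaces the paper's appeal to the short exact sequence and the centrality of the image in ${\rm GL}(n,\Z)$ by the simpler observation that ${\rm diag}(k_1,\dots,k_n)$ must be invertible over $\Z$. Both arguments are sound; yours is a little more self-contained and makes explicit (as the paper's proof implicitly does too) that centralizing $ES_n$ alone already pins $\phi$ down, while the paper's version stays closer to the palindromic structure of the images.
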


\begin{proof}
It is easy to check that $t$ lies in $Z(\Pi A_n)$. Suppose that $\phi$ lies in $Z(\Pi A_n)$. We claim that $\phi=t^{\epsilon}$ for $\epsilon=0$ or $1$.  Let $1 \leq i \leq  n$ be fixed. Suppose that 
$$ (x_i)\phi=x_i ^{m_{i_1}} u_{i_1} x_i^{m_{i_2}} u_{i_2} \ldots x_i^{m_{i_k}}u_{i_k} x_i^{m_{i_{k +1}}},$$
where $m_{i_r} \in \Z$ and $u_{i_r} \in F(x_1, \ldots, \hat x_i, \ldots, x_n )$, the free group on $n-1$ generators. Since $(x_i)\phi$ is a palindrome, it follows that $m_{i_1}=m_{i_{k +1}}$, $u_{i_1} ={ \overline{u}_{i_k}}$, $m_{i_2}=m_{i_{k}}$, $u_{i_2} ={ \overline{u}_{i_{k-1}}}$, and so on.

Since $\phi$ is central, we have $\phi t_i= t_i \phi$. Hence 
$$(x_i)t_i \phi = x_i^{-m_{i_{k+1}}} u_{i_k}^{-1} \ldots u_{i_1}^{-1} x_i^{-m_{i_1}}$$
and
$$(x_i)\phi t_i=x_i^{-m_{i_1}} u_{i_1} x_i^{-m_{i_2}} \ldots u_{i_k} x_i^{-m_{i_{k+1}}}$$
are equal. This implies that $u_{i_k}^{-1} =u_{i_1}$. But, since $u_{i_1}=\overline{u}_{i_k}$, we obtain that $u_{i_1}=1$. Continuing this way, we get that $\phi(x_i)=x_i^{m_i}$ for all $1 \leq i \leq n$. 

Again, since $\phi$ is central, we have $\phi \alpha_{i, i+1}=\alpha_{i, i+1} \phi$. It is easy to see that 
$$\phi \alpha_{i, i+1} : \left\{ \begin{array}{ll}
x_i \longmapsto x_{i+1}^{m_i} & \\
x_{i+1} \longmapsto x_i^{m_{i+1}}& \end{array} \right.
$$
and
$$
\alpha_{i, i+1} \phi: \left\{ \begin{array}{ll}
x_i \longmapsto x_{i+1}^{m_{i+1}} & \\
x_{i+1} \longmapsto x_i^{m_i}. & \end{array} \right. $$
This implies that $m_i=m_{i+1}$ and hence $m_1=m_2=\cdots=m_n=m$, say. Further, since $\phi$ is a central automorphism, it follows from the short exact sequence \eqnref{se1} that projection of $\phi$ is an element of the center of ${\rm GL}(n, \Z)$. This implies that $m=1$ or $-1$ and hence $\phi=t^{\epsilon}$.  This proves the theorem. 
\end{proof}

As a consequence, we have the following corollary.

\begin{cor}\label{cor-c}
The center of $E \Pi A_n$ is trivial. 
\end{cor}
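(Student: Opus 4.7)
The plan is to adapt the analysis used in the proof of \thmref{center} to an arbitrary $\phi \in Z(E\Pi A_n)$, and then to eliminate the only possibly surviving candidate $t$ by means of the semidirect-product decomposition $\Pi A_n \cong E\Pi A_n \rtimes ES_n$ established in \cite{Collins}. Set $y_k = (x_k)\phi$. The centrality relation $\phi\mu_{ij} = \mu_{ij}\phi$, evaluated on each $x_k$ using left-to-right composition, yields the two families of identities
$$
(y_k)\mu_{ij} = y_k \text{ for } k \neq i, \qquad (y_i)\mu_{ij} = y_j y_i y_j.
$$

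The first step is to show that $y_k$ must be a power of $x_k$. For this I would prove the lemma that the fixed subgroup of $\mu_{ij}$ acting on $F_n$ is precisely $F(\hat x_i) = \langle x_l : l \neq i\rangle$. Writing $w \in F_n$ in the syllable form $w = u_0 x_i^{\epsilon_1} u_1 \cdots x_i^{\epsilon_m} u_m$ with $u_r \in F(\hat x_i)$, the image $(w)\mu_{ij}$ replaces each $x_i^{\epsilon_r}$ by $x_j^{\epsilon_r} x_i^{\epsilon_r} x_j^{\epsilon_r}$; tracking the position of the first letter $x_i^{\pm 1}$ after free reduction shows that this position differs from the corresponding position $|u_0|+1$ in $w$ by $\pm 1$ whenever $m \geq 1$ (depending on whether the last letter of $u_0$ is $x_j^{-\epsilon_1}$, in which case it cancels the newly inserted $x_j^{\epsilon_1}$). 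Hence $(w)\mu_{ij} = w$ forces $m = 0$. Intersecting the fixed subgroups over all $i \neq k$ gives $y_k \in \bigcap_{i \neq k} F(\hat x_i) = \langle x_k \rangle$, so $y_k = x_k^{m_k}$ for some integer $m_k$.

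Substituting this into the second identity yields $(x_j x_i x_j)^{m_i} = x_j^{m_j} x_i^{m_i} x_j^{m_j}$. A direct comparison of reduced words shows that $|m_i| \geq 2$ is impossible (the copies of $x_i^{\pm 1}$ on the left are separated by $x_j^{\pm 2}$), while $m_i = 0$ would make $\phi$ non-injective; the only remaining possibility is $m_i = m_j \in \{+1,-1\}$. Propagating over all pairs gives $m_1 = \cdots = m_n = m$, so $\phi$ is either the identity or the involution $t \in Z(\Pi A_n)$ from \thmref{center}. Since $t = t_1 t_2 \cdots t_n$ is a nontrivial element of $ES_n$ and the semidirect product satisfies $E\Pi A_n \cap ES_n = \{1\}$, we have $t \notin E\Pi A_n$, which forces $\phi = 1$.

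The main obstacle will be the fixed-subgroup lemma for $\mu_{ij}$: it requires a careful free-reduction length/position count in $F_n$. Everything else amounts to direct comparison of reduced words.
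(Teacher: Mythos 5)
Your argument is correct, but it is a genuinely different route from the paper's. The paper offers no independent proof: it presents the corollary as an immediate consequence of \thmref{center}, the implicit deduction being that a central element of $E\Pi A_n$ would have to coincide with a power of $t$, which does not lie in $E\Pi A_n$. That deduction is thin, since the proof of \thmref{center} extracts the form $x_i \mapsto x_i^{m}$ precisely by commuting $\phi$ against $t_i$ and $\alpha_{i,i+1}$ --- generators that are \emph{not} available to an element that is merely central in $E\Pi A_n$. Your proof supplies exactly the missing ingredient: the identities $(y_k)\mu_{ij}=y_k$ ($k\neq i$) together with the fixed-subgroup computation $\mathrm{Fix}(\mu_{ij})=F(\hat x_i)$ force $y_k\in\bigcap_{i\neq k}F(\hat x_i)=\langle x_k\rangle$ using only the elementary generators, after which the relation $(x_jx_ix_j)^{m_i}=x_j^{m_j}x_i^{m_i}x_j^{m_j}$ and the splitting $E\Pi A_n\cap ES_n=1$ finish the job. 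What your approach buys is a self-contained and fully rigorous verification (and a fixed-subgroup lemma for $\mu_{ij}$ of independent interest); what the paper's approach buys is brevity, at the cost of leaving the reduction from $Z(E\Pi A_n)$ to $Z(\Pi A_n)$ unjustified. One small imprecision: $\mu_{ij}$ sends $x_i^{\epsilon_r}$ to $(x_jx_ix_j)^{\epsilon_r}$, not to $x_j^{\epsilon_r}x_i^{\epsilon_r}x_j^{\epsilon_r}$ when $|\epsilon_r|\geq 2$; this does not affect your position-tracking of the first $x_i^{\pm1}$ letter, which only sees the single letter $x_j^{\pm1}$ inserted after $u_0$, but the syllable form should be stated accurately.
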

\bigskip

\section{Conjugacy classes of involutions in $\Pi A_2$} \label{sec3}
It is well known that there are four conjugacy classes of involutions in $Aut(F_2)$. See for example \cite[Section 4, Proposition 4.6]{Lyndon}. It is interesting to find conjugacy classes of involutions in $\Pi A_2$ since $\Pi A_2 \leq Aut(F_2)$. First, we investigate involutions in $\Pi A_2$. For simplicity, we denote the elements $t_1$, $t_2$ and $\alpha_{12}$ of $\Pi A_2$ by $\sigma$, $\delta$ and $\rho$, respectively. Let  $F_2=\langle x, y\rangle$ and $E \Pi A_2=\langle {\mu_{12}}, {\mu_{21}} \rangle$,  where
$${\mu_{12}}: \left\{ \begin{array}{ll}
x \longmapsto yxy & \\
y \longmapsto y &, \end{array} \right.
{\mu_{21}}: \left\{ \begin{array}{ll}
x \longmapsto x & \\
y \longmapsto xyx. & \end{array} \right. $$

We have five involutions in $E S_2$ given by the automorphisms $\sigma, ~ \delta, ~ \rho, ~ \sigma \delta$ and $\sigma \delta \rho$,  where

$$\sigma: \left\{
\begin{array}{ll}
x \longmapsto x^{-1} & \\
y \longmapsto y &,
\end{array} \right.
\delta: \left\{
\begin{array}{ll}
x \longmapsto x & \\
y \longmapsto y^{-1} &,
\end{array} \right.
\rho: \left\{
\begin{array}{ll}
x \longmapsto y & \\
y \longmapsto x. &
\end{array} \right.
$$

We are interested in obtaining involutions in $\Pi A_2=E \Pi A_2 \rtimes ES_2$. Note that $E \Pi A_2 =\langle \mu_{12}, \mu_{21} \rangle \cong F_2$. By a palindrome in  $E \Pi A_2$, we mean a palindrome in the free generating set $\{ \mu_{12}, \mu_{21} \}$ of $E \Pi A_2$.

\begin{lemma} \label{inv1}
Let $\lambda \in \{\sigma, \delta\}$ and $f \in E \Pi A_2$. Then $f \lambda$ is an involution if and only if  $f$ is a palindrome in $E \Pi A_2$.
\end{lemma}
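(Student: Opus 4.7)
The plan is to work with the semidirect product structure $\Pi A_2 = E\Pi A_2 \rtimes ES_2$ and reduce the involution condition to an equation inside the free group $E\Pi A_2 = \langle \mu_{12}, \mu_{21} \rangle$.

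First I would compute $(f\lambda)^2$. Since $\lambda \in \{\sigma, \delta\} = \{t_1, t_2\}$ is itself an involution, $\lambda^{-1}=\lambda$, and pushing the second factor $f$ past $\lambda$ in $f\lambda f \lambda$ yields
\[
(f\lambda)^2 = f\,(\lambda f \lambda^{-1})\,\lambda^2 = f \cdot (\lambda f \lambda).
\]
Hence $f\lambda$ is an involution if and only if $\lambda f \lambda = f^{-1}$ in $E\Pi A_2$.

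Next I would identify the action of $\lambda$ on the free group $E\Pi A_2$ in terms of the free generators $\mu_{12}, \mu_{21}$. The Collins relations recalled in the introduction give $t_i \mu_{ij} t_i = \mu_{ij}^{-1}$ and $t_j \mu_{ij} t_j = \mu_{ij}^{-1}$, and in rank two every index lies in $\{1,2\}$. Consequently, both $\sigma = t_1$ and $\delta = t_2$ act on $E\Pi A_2 \cong F_2$ by simultaneously inverting the two free generators. Writing a reduced word $f = \mu_{i_1}^{\epsilon_1}\mu_{i_2}^{\epsilon_2}\cdots \mu_{i_k}^{\epsilon_k}$ with $i_r\in\{12,21\}$ and $\epsilon_r\in\{\pm 1\}$, this gives
\[
\lambda f \lambda \;=\; \mu_{i_1}^{-\epsilon_1}\mu_{i_2}^{-\epsilon_2}\cdots \mu_{i_k}^{-\epsilon_k}, \qquad f^{-1} \;=\; \mu_{i_k}^{-\epsilon_k}\cdots \mu_{i_2}^{-\epsilon_2}\mu_{i_1}^{-\epsilon_1}.
\]
Both expressions are reduced (inverting letters of a reduced word does not create cancellations), so equality in the free group $E\Pi A_2$ forces equality letter by letter.

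Comparing the two sides I would conclude that $\lambda f \lambda = f^{-1}$ holds precisely when $i_r = i_{k+1-r}$ and $\epsilon_r = \epsilon_{k+1-r}$ for every $r$, which is exactly the statement that $f$ is a palindrome in the generators $\mu_{12},\mu_{21}$. The converse direction is the same computation read backwards: if $f$ is a palindrome, then the letter-by-letter identification shows $\lambda f \lambda = f^{-1}$, and therefore $(f\lambda)^2 = 1$. I do not anticipate any serious obstacle — the only point that requires any care is observing that $\lambda$ acts by inverting both free generators (so that reversing and inverting the word are compatible operations), after which the argument is mechanical.
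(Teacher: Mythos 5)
Your proposal is correct and follows essentially the same route as the paper: both reduce $(f\lambda)^2=1$ to the identity $\lambda f\lambda=f^{-1}$ in the free group $E\Pi A_2$ using the fact that $\sigma$ and $\delta$ each invert both free generators $\mu_{12},\mu_{21}$, and then compare words. Your letter-by-letter comparison of the two reduced words $\lambda f\lambda$ and $f^{-1}$ is in fact a slightly cleaner way to get the converse direction, which the paper only sketches via a case reduction on the syllable exponents $a_i,b_i$.
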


\begin{proof}
Let $f={\mu_{12}}^{a_1} {\mu_{21}}^{b_1} \ldots {\mu_{12}}^{a_k} {\mu_{21}}^{b_k}$ for some integers $a_i$, $b_i$. Then
$$(f \lambda)^2=f \lambda f \lambda= f f^{\lambda} \in E \Pi A_2.$$
We know that
$${\mu_{12}}^{\sigma}={\mu_{12}}^{-1}, ~ {\mu_{21}}^{\sigma}={\mu_{21}}^{-1},$$
$${\mu_{12}}^{\delta}={\mu_{12}}^{-1}, ~ {\mu_{21}}^{\delta} ={\mu_{21}}^{-1}.$$
It follows that 
$$ff^{\lambda}={\mu_{12}}^{a_1} {\mu_{21}}^{b_1} \ldots {\mu_{12}}^{a_k} {\mu_{21}}^{b_k} ~ {\mu_{12}}^{-a_1} {\mu_{21}}^{-b_1} \ldots {\mu_{12}}^{-a_k} {\mu_{21}}^{-b_k}.$$
If $f$ is a palindrome, then we have the following two cases.
\begin{enumerate}
\item $b_k=0$ and $a_k=a_1$, $b_{k-1}=b_1$, $a_{k-1}=a_2$ and so on. This implies that $ff^{\lambda}=1$.
\item  $a_1=0$ and $b_k=b_1$, $a_k=a_2$ and so on. This again implies that $ff^{\lambda}=1$.
\end{enumerate}
Conversely, if $ff^{\lambda}=1$, then we reduce to the above two cases and that implies that $f$ is a palindrome.
\end{proof}

\begin{lemma}\label{inv2}
Let $f \in E \Pi A_2$. Then $f \sigma \delta$ is an involution if and only if $f=1$.
\end{lemma}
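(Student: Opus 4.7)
The plan is to reduce the claim to the fact that $E\Pi A_2$ is torsion free by exploiting the centrality of $\sigma\delta$. Observe first that $\sigma\delta$ is exactly the automorphism $x\mapsto x^{-1}$, $y\mapsto y^{-1}$, which is the central element $t$ of \thmref{center}. Alternatively, one verifies centrality directly from the conjugation rules recorded after the semidirect product decomposition: $\mu_{12}^{\sigma\delta}=(\mu_{12}^{-1})^{\delta}=\mu_{12}$, and similarly $\mu_{21}^{\sigma\delta}=\mu_{21}$, so $\sigma\delta$ commutes with every element of $E\Pi A_2$. Since also $(\sigma\delta)^2=1$, we get, for any $f\in E\Pi A_2$,
\[
(f\sigma\delta)^2 \;=\; f\,\sigma\delta\,f\,\sigma\delta \;=\; f\cdot f\cdot(\sigma\delta)^2 \;=\; f^2.
\]

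Next I would note that $f\sigma\delta$ is an involution iff $(f\sigma\delta)^2=1$ and $f\sigma\delta\neq 1$. The non-triviality condition is automatic: since $\Pi A_2=E\Pi A_2\rtimes ES_2$, having $f\sigma\delta=1$ with $f\in E\Pi A_2$ would force $\sigma\delta=1$, which is false. Hence the condition reduces to $f^2=1$.

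Finally, the paper has just recorded that $E\Pi A_2=\langle\mu_{12},\mu_{21}\rangle\cong F_2$, so $E\Pi A_2$ is a free group of rank $2$ and in particular torsion free. Therefore $f^2=1$ forces $f=1$. Conversely, $f=1$ gives $f\sigma\delta=\sigma\delta$, which is clearly an involution. This completes both directions.

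There is no real obstacle here: once one recognises $\sigma\delta$ as the central involution $t$, the whole statement collapses via $(ft)^2=f^2$ and the absence of $2$-torsion in the free group $E\Pi A_2$. The only point requiring minor care is keeping the convention that an involution means an element of order exactly two, so that the direction $f=1\Rightarrow f\sigma\delta$ is an involution still needs the observation that $\sigma\delta\neq 1$.
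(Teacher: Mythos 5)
Your proof is correct and follows essentially the same route as the paper: both use the centrality of $\sigma\delta$ (Theorem \ref{center}) to reduce $(f\sigma\delta)^2=1$ to $f^2=1$, and then the fact that $E\Pi A_2\cong F_2$ is torsion free to conclude $f=1$. You simply spell out the steps (the conjugation computation, the torsion-freeness, and the order-exactly-two convention) that the paper leaves implicit.
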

\begin{proof}
If $f \sigma \delta$ is an involution, then $f \sigma \delta f \sigma \delta=1$. By Theorem \ref{center}, $\sigma \delta \in Z(\Pi A_2)$. It follows that $f=1$. The converse is obvious.
\end{proof}

\begin{lemma}\label{inv3}
Let $\lambda \in \{\rho, \sigma \delta \rho\}$ and  $f={\mu_{12}}^{a_1} {\mu_{21}}^{b_1} \ldots {\mu_{12}}^{a_k} {\mu_{21}}^{b_k}$ be an element in $E \Pi A_2$. Then $f \lambda$ is an involution if and only if
\begin{equation} \label{fp} f=
\left\{\begin{array}{ll}
{\mu_{12}}^{a_1} {\mu_{21}}^{b_1} \ldots {\mu_{12}}^{a_m} {\mu_{21}}^{b_m} {\mu_{12}}^{-b_m} {\mu_{21}}^{-b_m} \ldots {\mu_{12}}^{-b_1} {\mu_{21}}^{-a_1} \hbox{ if } k=2m & \\\
 &\\
{\mu_{12}}^{a_1} {\mu_{21}}^{b_1} \ldots {\mu_{12}}^{a_m} {\mu_{21}}^{b_m} ({\mu_{12}}^{a_{m+1}} {\mu_{21}}^{-a_{m+1}}) {\mu_{12}}^{-b_m} {\mu_{21}}^{-a_m} \ldots {\mu_{12}}^{-b_1} {\mu_{21}}^{-a_1} \hbox{ if }k=2m+1. &
 \end{array} \right.\end{equation}
\end{lemma}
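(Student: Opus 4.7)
The plan is to unify the two choices of $\lambda$, reduce the involution condition to a single equation $\rho f \rho = f^{-1}$ in $E \Pi A_2$, and then solve that equation by using the freeness of $E \Pi A_2$ on $\{\mu_{12}, \mu_{21}\}$.

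First, I would observe that $\rho^2 = 1$ and, by Theorem \ref{center}, $\sigma\delta \in Z(\Pi A_2)$ with $(\sigma\delta)^2 = 1$, so $\sigma\delta\rho$ is itself an involution. Expanding $(f\lambda)^2$ and pulling $\sigma\delta$ through everything by centrality gives
\[
(f\rho)^2 = f\rho f\rho, \qquad (f\sigma\delta\rho)^2 = f\rho f\rho \cdot (\sigma\delta)^2 = f\rho f\rho.
\]
Hence in both cases $f\lambda$ is an involution if and only if $\rho f \rho = f^{-1}$.

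Next, I would write out both sides of this equation explicitly. The action rule $\rho \mu_{ij} \rho = \mu_{\rho(i)\rho(j)}$ shows that conjugation by $\rho$ swaps $\mu_{12}$ and $\mu_{21}$, so
\[
\rho f \rho = \mu_{21}^{a_1}\mu_{12}^{b_1}\cdots \mu_{21}^{a_k}\mu_{12}^{b_k},
\]
while directly
\[
f^{-1} = \mu_{21}^{-b_k}\mu_{12}^{-a_k}\cdots \mu_{21}^{-b_1}\mu_{12}^{-a_1}.
\]
Since $E \Pi A_2 \cong F(\mu_{12}, \mu_{21})$ is free, matching these two words letter by letter produces the system $a_i = -b_{k+1-i}$ and $b_i = -a_{k+1-i}$ for $1 \leq i \leq k$.

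Finally, I would split on the parity of $k$. If $k = 2m$, the relations pair each exponent in the right half of the word with one in the left half, with no self-pairing, recovering the first form in \eqref{fp}. If $k = 2m+1$, the middle index $i = m+1$ forces the single extra equation $a_{m+1} = -b_{m+1}$, which is precisely the central factor $\mu_{12}^{a_{m+1}}\mu_{21}^{-a_{m+1}}$ of the second form. The converse direction is an immediate substitution check into $\rho f \rho$.

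The only real obstacle is the bookkeeping around reduced words: one must allow zero exponents at the ends so that a single template factorisation covers all elements of $E \Pi A_2$, and one must handle the even/odd dichotomy cleanly so that the central self-paired block in the odd case is read off correctly. Beyond that, the argument is mechanical once the reduction to $\rho f \rho = f^{-1}$ has been made.
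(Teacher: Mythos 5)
Your proposal is correct and follows essentially the same route as the paper: the paper also computes $(f\rho)^2 = ff^{\rho}$, uses freeness of $E\Pi A_2$ on $\{\mu_{12},\mu_{21}\}$ to extract the relations $a_l=-b_{k-l+1}$, $b_l=-a_{k-l+1}$, and then splits on the parity of $k$. The only cosmetic difference is that you dispatch the case $\lambda=\sigma\delta\rho$ explicitly via the centrality and order of $\sigma\delta$, where the paper simply declares the other cases ``similar.''
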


\begin{proof}
The proof is obtained by induction. We sketch the proof for $\lambda=\rho$. The other cases are similar. We have
$$(f\rho)^2=ff^{\rho}={\mu_{12}}^{a_1} {\mu_{21}}^{b_1} \ldots {\mu_{12}}^{a_k} {\mu_{21}}^{b_k} ~ {\mu_{21}}^{a_1} {\mu_{12}}^{b_1} \ldots {\mu_{21}}^{a_k} {\mu_{12}}^{b_k}.$$
Since $(f\rho)^2=1$, we must have $a_l=-b_{k-l+1}, ~ b_l=-a_{k-l+1}$ for $l=1, \ldots, k$. Now it follows that $f$ has the form given in the lemma according as $k=2m$ or $k=2m+1$.
\end{proof}

We consider the following problem.

\begin{problem}
Is the conjugacy problem decidable in $\Pi A_n$ for $n \geq 3$.
\end{problem}

For $n=2$, the answer is positive and is given in the following lemma.

\begin{lemma}\label{con}
Let $g, g' \in \Pi A_2$ be such that $g=f \lambda$ and $g'=f' \lambda'$, where $f, f' \in E \Pi A_2$ and $\lambda, \lambda' \in E S_2$. Then $g$ and $g'$ are conjugate in $\Pi A_2$ if and only if
\begin{enumerate}
\item $\lambda'=\lambda^{ \mu}$ for some $\mu \in ES_2$.
 \item  $(f')^{\mu^{-1}}=l^{-1} f ~ l^{\lambda^{-1}}$ for some $l \in E \Pi A_2$.
\end{enumerate}
\end{lemma}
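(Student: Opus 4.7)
The plan is to use the semidirect product decomposition $\Pi A_2 = E\Pi A_2 \rtimes ES_2$ from Collins's theorem, which gives a unique factorization of every element of $\Pi A_2$ as (elementary palindromic part)(extended symmetric part). Combined with the commutation rules of the semidirect product, conjugation of $g = f\lambda$ by an arbitrary $h \in \Pi A_2$ becomes a direct computation.

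First I would write $h = l\mu$ with $l \in E\Pi A_2$ and $\mu \in ES_2$, so that $h^{-1} g h = \mu^{-1} l^{-1} f \lambda l \mu$. Using $\lambda l = l^{\lambda^{-1}} \lambda$ (which follows from the convention $z^\lambda = \lambda^{-1} z \lambda$) I would first move $\lambda$ past $l$ to obtain
\[
h^{-1} g h \;=\; \mu^{-1}\bigl(l^{-1} f \, l^{\lambda^{-1}}\bigr) \lambda \mu.
\]
Because $E\Pi A_2$ is normal, the element $l^{-1} f \, l^{\lambda^{-1}}$ lies in $E\Pi A_2$. Then, using $\mu^{-1} z = z^{\mu} \mu^{-1}$ for $z \in E\Pi A_2$, I would collect the $E\Pi A_2$ part on the left and the $ES_2$ part on the right to get
\[
h^{-1} g h \;=\; \bigl(l^{-1} f \, l^{\lambda^{-1}}\bigr)^{\mu} \,\lambda^{\mu}.
\]

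By uniqueness of the semidirect product factorization, the equation $h^{-1} g h = f'\lambda'$ now breaks into the two equations $\lambda' = \lambda^{\mu}$ and $(f')^{\mu^{-1}} = l^{-1} f \, l^{\lambda^{-1}}$, which are precisely the conditions (1) and (2) of the lemma. Conversely, given $\mu$ and $l$ satisfying these two conditions, setting $h = l\mu$ and reading the calculation backwards shows $h^{-1} g h = g'$.

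The main thing to be careful about is the notational convention: the paper fixes $g^h = h^{-1} g h$ and evaluates functions from left to right, so one must consistently use $\lambda l = l^{\lambda^{-1}} \lambda$ rather than $l^{\lambda} \lambda$. Beyond getting the signs right in the exponents of $\lambda$ and $\mu$, there is no genuine obstacle; the content of the lemma is really just the bookkeeping of conjugation inside a semidirect product.
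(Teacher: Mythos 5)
Your proposal is correct and follows essentially the same route as the paper: both write $h=l\mu$, push $\lambda$ past $l$ via $\lambda l = l^{\lambda^{-1}}\lambda$, conjugate the resulting $E\Pi A_2$-part by $\mu$, and read off the two conditions from the uniqueness of the factorization $\Pi A_2 = E\Pi A_2 \rtimes ES_2$. The only cosmetic difference is that the paper leaves the answer in the expanded form $(l^{-1})^{\mu}f^{\mu}l^{\lambda^{-1}\mu}$ rather than your cleaner $\bigl(l^{-1}f\,l^{\lambda^{-1}}\bigr)^{\mu}$; the content is identical.
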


\begin{proof}
$g$ and $g'$ are conjugate if and only if there exists $h=l \mu$ in $\Pi A_2$ such that
\begin{eqnarray*}
f' \lambda'=h^{-1}  g h &=& (l\mu)^{-1}~ f \lambda~ l \mu\\
&=& \mu^{-1} l^{-1} ~ f  ~ l ^{\lambda^{-1}} \lambda \mu \\
&=& (\mu^{-1} l^{-1} \mu)(\mu^{-1} f \mu) \mu^{-1} \lambda l \lambda^{-1} \mu \mu^{-1} \lambda \mu \\ &=& \big( (l^{-1})^{\mu } f ^{\mu} l^{\lambda^{-1} \mu} \big) (\lambda^{ \mu}).
\end{eqnarray*}
Thus $f'=(l^{-1})^{\mu } f ^{\mu} l^{\lambda^{-1}}$ and $\lambda'=\mu^{-1} \lambda \mu$. This proves the lemma.
\end{proof}

In \cite{ne2}, Nekritsuhin proved the following result.

\begin{theorem}
The following involutions are not conjugate in $\Pi A_2$
$$\sigma, ~ \sigma \delta,~\rho, ~ {\mu_{12}} \sigma, ~ {\mu_{12}} \delta=\rho({\mu_{21}} \sigma)\rho.$$
\end{theorem}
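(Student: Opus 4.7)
The plan is to reduce the claim to \lemref{con}. First I would write each of the five listed involutions as $f\lambda$ with $f \in E\Pi A_2$ and $\lambda \in ES_2$: the first three are $1\cdot\sigma$, $1\cdot\sigma\delta$, and $1\cdot\rho$; the fourth is $\mu_{12}\cdot\sigma$; and the fifth (using the identity noted in the theorem) is $\mu_{12}\cdot\delta$. Since $ES_2 \cong D_4$ has center $\langle\sigma\delta\rangle$, the $ES_2$-conjugacy classes of its involutions are $\{\sigma\delta\}$, $\{\sigma,\delta\}$, and $\{\rho,\sigma\delta\rho\}$. Condition~(1) of \lemref{con} forces conjugate elements of $\Pi A_2$ to have $ES_2$-parts in the same class, which partitions our five involutions into three groups, namely $\{\sigma\delta\}$, $\{\rho\}$, and $\{\sigma,\mu_{12}\sigma,\mu_{12}\delta\}$, with no conjugacies possible between different groups.

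It remains to distinguish the three involutions in the last group. For this, my plan is to pass to the abelianization $E\Pi A_2^{ab}\cong\Z^2$ of $E\Pi A_2\cong F_2$, sending $\mu_{12}\mapsto(1,0)$ and $\mu_{21}\mapsto(0,1)$. The key observation is that $\sigma$ and $\delta$ both invert each of the free generators $\mu_{12},\mu_{21}$, so they induce $-\mathrm{id}$ on $\Z^2$. Abelianizing the relation $(f')^{\mu^{-1}}=l^{-1}f\,l^{\lambda^{-1}}$ from condition~(2) of \lemref{con} with $\lambda\in\{\sigma,\delta\}$ yields $[(f')^{\mu^{-1}}]=[f]-2[l]$, and therefore
$$[(f')^{\mu^{-1}}]\equiv [f]\pmod{2}.$$
Modulo $2$, the $ES_2$-action on $(\Z/2)^2$ factors through $ES_2/\langle\sigma,\delta\rangle\cong\Z/2$, acting trivially on $\mu\in\langle\sigma,\delta\rangle$ and by coordinate swap on $\mu\in\rho\langle\sigma,\delta\rangle$. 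This gives a well-defined conjugacy invariant in $(\Z/2)^2$, up to the coordinate swap when $\lambda\ne\lambda'$.

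To finish I would tabulate the invariant: $[1]=(0,0)$ for $\sigma$, and $[\mu_{12}]=(1,0)$ for both $\mu_{12}\sigma$ and $\mu_{12}\delta$. Pairing $\sigma$ with $\mu_{12}\sigma$ (same $\lambda$, no swap) gives $(0,0)$ versus $(1,0)$; pairing $\sigma$ with $\mu_{12}\delta$ (swap applied to $(1,0)$) gives $(0,0)$ versus $(0,1)$; pairing $\mu_{12}\sigma$ with $\mu_{12}\delta$ (swap applied) gives $(1,0)$ versus $(0,1)$. In each case the two values disagree in $(\Z/2)^2$, so no conjugacy is possible, completing the proof. The only subtle point is verifying the mod-$2$ invariance itself; once the $-\mathrm{id}$ action of $\sigma$ and $\delta$ on $\Z^2$ is identified, the rest of the argument is a short case analysis on the coset of $\mu$ in $ES_2/\langle\sigma,\delta\rangle$.
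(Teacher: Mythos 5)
Your argument is correct, and it is genuinely different from what the paper does: the paper gives no proof of this statement at all, citing Nekritsuhin \cite{ne2}, and later invokes (again without proof) the claim from \cite{ne} that conjugate involutions $f\lambda$ and $f'\lambda'$ must have $\lambda=\lambda'$. You instead derive everything from \lemref{con}. Condition (1) of that lemma separates $\sigma\delta$ (central in $ES_2$) and $\rho$ from the rest via the three $ES_2$-conjugacy classes of \lemref{nel1}, and your mod-$2$ abelianization invariant handles the remaining triple $\{\sigma,\mu_{12}\sigma,\mu_{12}\delta\}$. The two points that need care both check out: since $\sigma$ and $\delta$ invert both $\mu_{12}$ and $\mu_{21}$, they act as $-\mathrm{id}$ on $E\Pi A_2^{ab}\cong\Z^2$, so condition (2) abelianizes to $[(f')^{\mu^{-1}}]=[f]-2[l]$; and the coset of $\mu$ modulo $\langle\sigma,\delta\rangle$ (hence swap or no swap mod $2$) is pinned down by condition (1), because the solution set of $\lambda^\mu=\lambda'$ is a coset of the centralizer $C_{ES_2}(\sigma)=\langle\sigma,\delta\rangle$. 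Your invariant is also consistent with the genuine conjugacy $\mu_{12}\delta=(\mu_{21}\sigma)^{\rho}$, which is a good sanity check. What your route buys is a self-contained proof inside the paper's own framework, replacing an external citation; what it costs is nothing beyond the small centralizer computation in $ES_2\cong D_4$ (the dihedral group of order $8$ --- worth saying explicitly, since the notation $D_4$ is ambiguous). A minor stylistic remark: the paper's displayed ``conjugation rule'' $(\mu_{12}\sigma)^{\rho}=\mu_{12}\delta$ is a typo for $(\mu_{12}\sigma)^{\rho}=\mu_{21}\delta$, and your computation implicitly gets this right.
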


\begin{lemma}\label{nel1}
There are three conjugacy classes of involutions in $ES_2$ represented by $\sigma, ~ \rho, ~ \sigma \delta.$
\end{lemma}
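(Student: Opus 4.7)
The plan is to work entirely inside the finite group $ES_2 = \langle \sigma, \delta, \rho\rangle$, which has order $8$ (signed permutations on two letters), and to exhibit the three classes concretely while ruling out further fusion via a simple invariant.

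First I would enumerate the involutions of $ES_2$. Using the relations $\sigma^2=\delta^2=\rho^2=1$, $\sigma\delta=\delta\sigma$, $\rho\sigma\rho=\delta$, $\rho\delta\rho=\sigma$ (the second and third being the restriction of the $ES_n$-action on $E\Pi A_n$ to the symmetric subgroup), the eight elements of $ES_2$ can be listed as $1,\sigma,\delta,\rho,\sigma\delta,\sigma\rho,\delta\rho,\sigma\delta\rho$. A quick check shows $(\sigma\rho)^2 = \sigma\cdot(\rho\sigma\rho)\cdot\rho\cdot 1 = \sigma\delta \neq 1$, so $\sigma\rho$ and $\delta\rho$ have order $4$, while the remaining five non-identity elements $\sigma,\delta,\rho,\sigma\delta,\sigma\delta\rho$ are involutions — exactly Nekritsuhin's list.

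Next I would collapse these five into three classes. By \thmref{center}, $\sigma\delta = t \in Z(\Pi A_2) \subseteq Z(ES_2)$, so $\{\sigma\delta\}$ is a singleton class. The relation $\rho\sigma\rho^{-1}=\delta$ gives $\sigma\sim\delta$. A direct computation shows $\sigma\rho\sigma^{-1}(x) = \sigma\rho(x^{-1}) = \sigma(y^{-1}) = y^{-1}$ and $\sigma\rho\sigma^{-1}(y) = \sigma\rho(y)=\sigma(x)=x^{-1}$, i.e.\ $\sigma\rho\sigma^{-1}=\sigma\delta\rho$, so $\rho\sim\sigma\delta\rho$. Hence there are at most three classes, represented by $\sigma$, $\rho$, $\sigma\delta$.

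Finally I would show the three representatives are pairwise non-conjugate. The class of $\sigma\delta$ is central, hence disjoint from the other two (which each contain two elements). To separate $\{\sigma,\delta\}$ from $\{\rho,\sigma\delta\rho\}$, observe that $\langle\sigma,\delta\rangle$ is normal in $ES_2$ (it is stable under conjugation by $\rho$ by the relations above), so there is a well-defined quotient homomorphism $\pi\colon ES_2 \twoheadrightarrow ES_2/\langle\sigma,\delta\rangle \cong \mathbb{Z}/2$, equivalently the map recording the underlying permutation of $\{x,y\}$. Under $\pi$ the elements $\sigma,\delta$ map to $0$ while $\rho,\sigma\delta\rho$ map to $1$. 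Since $\pi$ is a homomorphism, conjugate elements have equal image, which forces these two classes to be distinct. The only genuinely fiddly step is the direct conjugation computation $\sigma\rho\sigma^{-1}=\sigma\delta\rho$, but since $ES_2$ is finite of order $8$ this is entirely mechanical; there is no real obstacle.
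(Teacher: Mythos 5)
Your proof is correct and its core is the same as the paper's: the two conjugations $\rho\sigma\rho=\delta$ and $\sigma(\sigma\delta\rho)\sigma=\rho$ are exactly the fusion computations the paper uses. The only difference is that you make the proof self-contained by explicitly enumerating the five involutions of $ES_2\cong D_4$ and separating the three classes via centrality of $\sigma\delta$ and the sign-of-permutation homomorphism, whereas the paper leaves the non-conjugacy to the previously quoted theorem of Nekritsuhin; this is a harmless (indeed welcome) elaboration, not a different method.
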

\begin{proof}
This follows from the fact that $\rho \sigma \rho=\delta$ and $\sigma(\rho \sigma \delta) \sigma=\sigma \rho \delta=\rho(\rho \sigma \rho) \delta=\rho$.
\end{proof}

If $f \lambda$ and $f' \lambda'$ are two involutions that are conjugate in $\Pi A_2$, then it follows from \cite{ne} that $\lambda=\lambda'$. With this, we have the following result.

\begin{lemma} \label{inv4}
Let $f \lambda$ be an involution in $\Pi A_2$, where $f \in E \Pi A_2$ and $\lambda \in \{\sigma, \delta\}$. Then $f \lambda$ is conjugate to one of the involutions $\sigma$, ${\mu_{12}} \sigma$, ${\mu_{21}} \sigma$.
\end{lemma}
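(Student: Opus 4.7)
The plan is first to reduce to the case $\lambda=\sigma$, and then to normalize $f$ using Lemma \ref{con}. If $\lambda=\delta$, I would conjugate by $\rho$: since $\rho^{-1}\delta\rho=\sigma$ and $\rho$ swaps the free generators $\mu_{12}$ and $\mu_{21}$ of $E\Pi A_2$, it sends palindromes in these generators to palindromes, and Lemma \ref{con} yields $\rho^{-1}(f\delta)\rho=f^{\rho}\sigma$ with $f^{\rho}$ still a palindrome. Thus without loss of generality $\lambda=\sigma$ and, by Lemma \ref{inv1}, $f$ is a palindrome in $\{\mu_{12},\mu_{21}\}$.

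Next I would invoke the standard normal form for palindromes in a free group: any reduced palindrome $f\in E\Pi A_2\cong F_2$ has a unique expression $f=u\,c\,\overline{u}$, where $u\in E\Pi A_2$ and the ``center'' $c$ is $1$ or one of $\mu_{12}^{\pm 1},\mu_{21}^{\pm 1}$, with $uc\overline{u}$ reduced (simply split $f$ about its middle letter or middle gap). The key computation is then to apply Lemma \ref{con} with conjugator $h=u\in E\Pi A_2$, that is, with $l=u$ and $\mu=1$. Since $\sigma$ inverts each of $\mu_{12},\mu_{21}$ while preserving the order of letters, one has $u^{\sigma}=\overline{u}^{-1}$, and hence
\[
u^{-1}\,f\,u^{\sigma}=u^{-1}(u\,c\,\overline{u})\,\overline{u}^{-1}=c
\]
after free cancellation. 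So $f\sigma$ is conjugate in $\Pi A_2$ to $c\sigma$.

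It remains to absorb the sign when $c\in\{\mu_{12}^{-1},\mu_{21}^{-1}\}$. Conjugating $c\sigma$ by $\sigma$ (Lemma \ref{con} with $l=1$, $\mu=\sigma$) replaces $c$ by $c^{\sigma}$, flipping its exponent, so $\mu_{12}^{-1}\sigma\sim\mu_{12}\sigma$ and $\mu_{21}^{-1}\sigma\sim\mu_{21}\sigma$. Collecting the cases $c\in\{1,\mu_{12},\mu_{21}\}$ yields the three claimed representatives $\sigma$, $\mu_{12}\sigma$, and $\mu_{21}\sigma$. The only step requiring real care is the identification $u^{\sigma}=\overline{u}^{-1}$, which makes the twisted conjugation $u^{-1}fu^{\sigma}$ peel off the palindromic shell around $c$; once that is noted the argument is purely formal.
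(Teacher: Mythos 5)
Your proof is correct and follows essentially the same route as the paper: conjugate $f\lambda$ by an element $u\in E\Pi A_2$ so that the twisted conjugation $u^{-1}f\,u^{\lambda}$ cancels the palindromic shell of $f$ down to its central syllable, then normalize that syllable (the paper proves the equivalent $\delta$-version and passes to $\sigma$ via $\rho$, exactly as you do in reverse). Your packaging via the decomposition $f=u\,c\,\overline{u}$ together with the identity $u^{\sigma}=\overline{u}^{-1}$ is a tidier way of organizing the paper's step-by-step cancellation $c_1=-a_1$, $d_1=-b_1$, $\dots$, and your final conjugation by $\sigma$ to replace $c$ by $c^{-1}$ plays the role of the paper's choice of $l$ with $a-2l\in\{0,1\}$.
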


Note that, the conjugation rule  in $\Pi A_2$ is
$$\sigma^{\rho} = \delta,~ ( \mu_{12} \sigma)^{\rho} = \mu_{12} \delta, ~( \mu_{21} \sigma)^{\rho} = \mu_{21} \delta.$$ Thus an equivalent formulation of the above lemma is the following.
 
\begin{lemma} \label{inv5}
Let $f \lambda$ be an involution in $\Pi A_2$, where $f \in E \Pi A_2$ and $\lambda \in \{\sigma, \delta\}$. Then $f \lambda$ is conjugate to one of the involutions $\delta$, ${\mu_{12}} \delta$, ${\mu_{21}}  \delta$.
\end{lemma}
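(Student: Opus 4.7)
The plan is to derive \lemref{inv5} directly from \lemref{inv4} by conjugating the given involution by $\rho$. The two lemmas share the same hypothesis on $f\lambda$ (with $\lambda\in\{\sigma,\delta\}$); only the list of proposed conjugacy representatives differs, the $\sigma$-factor being replaced by $\delta$. Since $\rho\in\Pi A_2$ is an involution and $\sigma^\rho=\delta$, conjugation by $\rho$ should carry the three representatives of \lemref{inv4} onto the three representatives of \lemref{inv5}.

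First I would read off the action of $\rho$ on the relevant elements from the $ES_2$-action relations on $E\Pi A_2$ recalled at the start of the paper, namely $\alpha\mu_{ij}\alpha=\mu_{\alpha(i)\alpha(j)}$. This yields $\rho\mu_{12}\rho=\mu_{21}$ and $\rho\mu_{21}\rho=\mu_{12}$; combined with $\rho\sigma\rho=\delta$ (since $\rho$ swaps the indices $1$ and $2$ while $\sigma=t_1$), one obtains
\begin{align*}
\sigma^\rho &= \delta,\\
(\mu_{12}\sigma)^\rho &= (\rho\mu_{12}\rho)(\rho\sigma\rho)=\mu_{21}\delta,\\
(\mu_{21}\sigma)^\rho &= \mu_{12}\delta.
\end{align*}
Thus conjugation by $\rho$ sends the set $\{\sigma,\mu_{12}\sigma,\mu_{21}\sigma\}$ onto $\{\delta,\mu_{12}\delta,\mu_{21}\delta\}$.

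Next, given an involution $f\lambda$ satisfying the hypothesis of \lemref{inv5}, \lemref{inv4} provides an element $g\in\Pi A_2$ and an $r\in\{\sigma,\mu_{12}\sigma,\mu_{21}\sigma\}$ with $g^{-1}(f\lambda)g=r$. Then $(g\rho)^{-1}(f\lambda)(g\rho)=r^\rho$ lies in $\{\delta,\mu_{12}\delta,\mu_{21}\delta\}$, which is exactly the conclusion of \lemref{inv5}.

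There is no serious obstacle here, since the result is essentially a reformulation of \lemref{inv4} via the fixed conjugator $\rho$. The only care needed is in correctly applying the $ES_2$-action formulas to the three representatives, and these formulas are precisely the ones recorded in the note immediately preceding the statement of \lemref{inv5}.
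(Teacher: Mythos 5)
Your argument is logically sound as a reduction, but it is not the paper's proof, and it quietly defers all of the substance. What you have written is precisely the remark the authors place \emph{between} Lemma~\ref{inv4} and Lemma~\ref{inv5} (``an equivalent formulation of the above lemma''); the proof the paper actually attaches to Lemma~\ref{inv5} is a direct computation: write $f$ as a palindrome in $\mu_{12},\mu_{21}$ using Lemma~\ref{inv1}, conjugate by a general $h=\mu_{12}^{c_1}\mu_{21}^{d_1}\cdots\mu_{12}^{c_k}\mu_{21}^{d_k}\in E\Pi A_2$ via Lemma~\ref{con}, choose the $c_i,d_i$ so that the word telescopes to $\mu_{12}^{a-2l}\lambda$ or $\mu_{21}^{b-2l}\lambda$, and then pick $l$ to make the exponent $0$ or $1$. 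That computation is what actually produces the three representatives, and it is the proof of Lemma~\ref{inv4} as well --- the paper gives Lemma~\ref{inv4} no other justification. So your derivation of Lemma~\ref{inv5} from Lemma~\ref{inv4} is, in the context of this paper, circular in substance: the only available proof of the lemma you are citing is the very proof you are asked to supply. If you want your route to stand on its own, you must either prove Lemma~\ref{inv4} by the cancellation argument above or run that argument directly with $\lambda=\delta$.

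On the details you do carry out: your computation $(\mu_{12}\sigma)^{\rho}=(\rho\mu_{12}\rho)(\rho\sigma\rho)=\mu_{21}\delta$ and $(\mu_{21}\sigma)^{\rho}=\mu_{12}\delta$ is correct (it follows from $\alpha_{12}\mu_{12}\alpha_{12}=\mu_{21}$ and can be checked on $F_2$ directly), and it actually contradicts the rule $(\mu_{12}\sigma)^{\rho}=\mu_{12}\delta$ displayed in the paper just before Lemma~\ref{inv5}; the paper's displayed rule appears to have the indices transposed. This does not affect either argument, since conjugation by $\rho$ still carries the set $\{\sigma,\mu_{12}\sigma,\mu_{21}\sigma\}$ bijectively onto $\{\delta,\mu_{12}\delta,\mu_{21}\delta\}$, which is all the equivalence needs.
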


\begin{proof}
Let $f \lambda$ and $g \lambda$ be conjugate involutions, where $\lambda \in \{\sigma, \delta\}$.  We want to obtain a representative $(f \lambda)^h$ of the conjugacy class. Using  \lemref{inv1} and \lemref{con}, without loss of generality, we can assume that
$$f={\mu_{12}}^{a_1} {\mu_{21}}^{b_1} \ldots {\mu_{21}}^{b_1 }{\mu_{12}}^{a_1} \lambda $$ and $$h={\mu_{12}}^{c_1} {\mu_{21}}^{d_1} \ldots {\mu_{12}}^{c_k} {\mu_{21}}^{d_k}.$$
Then
\begin{equation*}
\begin{split}
(f \lambda)^h = ~& {\mu_{21}}^{-d_k} {\mu_{12}}^{-c_k} \ldots {\mu_{21}}^{-d_1} {\mu_{12}}^{-c_1} ~ {\mu_{12}}^{a_1} {\mu_{21}}^{b_1} \\
& \ldots {\mu_{21}}^{b_1 }{\mu_{12}}^{a_1} ~ {\mu_{12}}^{-c_1} {\mu_{21}}^{-d_1}\ldots {\mu_{12}}^{-c_k} {\mu_{21}}^{-d_k} \lambda.
\end{split}
\end{equation*}
We can choose $h$ such that $c_1=-a_1$, $d_1=-b_1$, etc. This reduces the length of the word
$(f \lambda)^h$ and after cancellation of the middle terms, we get a reduced word of the form
$${\mu_{12}}^{-l} {\mu_{12}}^a {\mu_{12}}^{-l}={\mu_{12}}^{a-2l}$$
or 
$${\mu_{21}}^{-l} {\mu_{21}}^b {\mu_{21}}^{-l}={\mu_{21}}^{b-2l}.$$ Depending on whether $a$ is even or odd, we can choose $l$ such that $a-2l=1$ or $0$, and we can do the same for $b-2l$. Hence the conjugacy representatives would be the desired ones.
\end{proof}

\begin{lemma}\label{inv7}
Any involution of the form $f\rho$ where $f \in E \Pi A_2$ is conjugate to $\rho$ by some element of $E \Pi A_2$.
\end{lemma}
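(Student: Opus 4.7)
The plan is to use the explicit form of $f$ furnished by \lemref{inv3} to display, by inspection, an element $l \in E \Pi A_2$ that conjugates $f \rho$ to $\rho$. First I unpack the conjugation: since $\rho$ is an involution and its action on $E \Pi A_2$ swaps $\mu_{12}$ and $\mu_{21}$, we have $\rho \, l = l^{\rho} \, \rho$ for every $l \in E \Pi A_2$. Consequently
$$l^{-1}(f\rho)\, l \;=\; l^{-1} f \, l^{\rho}\, \rho,$$
so the desired conclusion $(f\rho)^{l} = \rho$ is equivalent to exhibiting a factorisation $f = l \cdot (l^{-1})^{\rho}$ with $l \in E \Pi A_2$.

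Guided by the two cases of \lemref{inv3}, I would take $l$ to be the \emph{left half} of the displayed word for $f$. In the even case $k = 2m$, set
$$l \;=\; \mu_{12}^{a_1}\, \mu_{21}^{b_1} \cdots \mu_{12}^{a_m}\, \mu_{21}^{b_m},$$
and in the odd case $k = 2m+1$, set
$$l \;=\; \mu_{12}^{a_1}\, \mu_{21}^{b_1} \cdots \mu_{12}^{a_m}\, \mu_{21}^{b_m}\, \mu_{12}^{a_{m+1}}.$$
Computing $(l^{-1})^{\rho}$ via $\mu_{12}^{\rho} = \mu_{21}$ and $\mu_{21}^{\rho} = \mu_{12}$, and then multiplying on the left by $l$, one recovers precisely the expression for $f$ given in \lemref{inv3}. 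In the odd case, the middle block $\mu_{12}^{a_{m+1}} \mu_{21}^{-a_{m+1}}$ emerges exactly as the concatenation of the last syllable $\mu_{12}^{a_{m+1}}$ of $l$ with the first syllable $\mu_{21}^{-a_{m+1}}$ of $(l^{-1})^{\rho}$.

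There is no serious obstacle here beyond careful bookkeeping: the palindromic-type symmetry extracted in \lemref{inv3} is tailored so that this choice of $l$ produces the required factorisation $f = l \cdot (l^{-1})^{\rho}$. Substituting back yields $(f\rho)^{l} = \rho$ with $l \in E \Pi A_2$, as claimed.
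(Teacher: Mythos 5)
Your argument is correct and is essentially the paper's own proof made explicit: the paper likewise takes $f$ in the form provided by Lemma~\ref{inv3} and conjugates by an element of $E\Pi A_2$ built from the left half of that word, sketching the cancellation rather than writing the factorisation $f = l\,(l^{-1})^{\rho}$ as you do. Your version is a cleaner bookkeeping of the same computation (and, as a side benefit, it confirms that the exponent $-b_m$ in the second $\mu_{21}$-syllable of the even case of \eqnref{fp} should read $-a_m$).
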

\begin{proof}
Take an involution of the form $f\rho$, where $f$ is of the form \eqnref{fp}. After writing down the expression for $(f \rho)^h$, as in the proof of \lemref{inv5}, we can choose $h$ such that $(f\rho)^h=\rho$.
\end{proof}

Combining the above lemmas, we establish the following theorem.

\begin{theorem}
There are five conjugacy classes of involutions in $\Pi A_2$. These conjugacy classes are represented by
$$\sigma, ~ \sigma \delta, ~ \rho, ~ {\mu_{12}} \sigma, ~ {\mu_{21}} \sigma.$$
\end{theorem}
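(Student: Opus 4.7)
The plan is to use the semi-direct product decomposition $\Pi A_2 = E\Pi A_2 \rtimes ES_2$ to write any involution uniquely as $g = f\lambda$ with $f \in E\Pi A_2$ and $\lambda \in ES_2$, and then combine the conjugacy criterion of \lemref{con} with the classification of involutions in $ES_2$ from \lemref{nel1} to reduce the problem to three cases, each already handled by one of the preceding lemmas.

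First I would observe that in the semi-direct product the equation $g^2 = 1$ forces $\lambda^2 = 1$; moreover $\lambda = 1$ is excluded, because then $f$ would be a non-trivial involution in $E\Pi A_2 \cong F_2$, which is torsion-free. Hence $\lambda$ lies among the five non-trivial involutions $\sigma, \delta, \rho, \sigma\delta, \sigma\delta\rho$ of $ES_2$. By condition (1) of \lemref{con}, conjugating $g$ by an element of $ES_2 \subset \Pi A_2$ sends its $ES_2$-component to an $ES_2$-conjugate, so by \lemref{nel1} we may assume that $\lambda$ is one of the three class representatives $\sigma, \rho, \sigma\delta$.

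Next I would split into three cases. If $\lambda = \sigma\delta$, \lemref{inv2} immediately gives $f = 1$, so $g = \sigma\delta$. If $\lambda = \sigma$, \lemref{inv4} shows $g$ is conjugate to one of $\sigma, \mu_{12}\sigma, \mu_{21}\sigma$. If $\lambda = \rho$, \lemref{inv7} shows $g$ is conjugate to $\rho$. Therefore every involution of $\Pi A_2$ is conjugate to one of the five listed representatives $\sigma, \sigma\delta, \rho, \mu_{12}\sigma, \mu_{21}\sigma$.

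Finally, pairwise non-conjugacy of these five representatives follows from Nekritsuhin's theorem stated just above \lemref{nel1}; the only point that needs attention is that the fifth representative here, $\mu_{21}\sigma$, lies in the same class as Nekritsuhin's fifth representative $\mu_{12}\delta$, because $\mu_{12}\delta = \rho(\mu_{21}\sigma)\rho$ as recorded there. No genuine obstacle remains at the theorem level: the real technical work has already been packaged into Lemmas~\ref{inv1}--\ref{inv7}, and the present theorem is essentially a bookkeeping argument combining the case analysis of $\lambda$ with those lemmas.
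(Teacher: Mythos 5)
Your proposal is correct and follows essentially the same route as the paper: the paper's proof is literally ``combining the above lemmas,'' and you have simply made explicit the reduction of $\lambda$ to a representative of its $ES_2$-class via \lemref{nel1} and \lemref{con}, the case split handled by Lemmas~\ref{inv2}, \ref{inv4} and \ref{inv7}, and the non-conjugacy via Nekritsuhin's theorem together with the identity $\mu_{12}\delta=\rho(\mu_{21}\sigma)\rho$. Your bookkeeping is if anything more careful than the paper's one-line justification.
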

\bigskip

\section{Linear Representation of $\Pi A_n$}\label{sec4}
In \cite{ne}, Nekritsuhin studied the linear representation $\phi: \Pi A_2 \to {\rm GL}(2, \Z)$ given by
$$(\mu_{12})\phi=\begin{pmatrix} 1 & 2 \\ 0 & 1 \end{pmatrix}, ~ (\mu_{21})\phi=\begin{pmatrix} 1 & 0 \\ 2 & 1 \end{pmatrix},$$
$$(\sigma)\phi=I_1=\begin{pmatrix} -1 & 0 \\ 0 & 1 \end{pmatrix}, ~ (\delta)\phi=I_2=\begin{pmatrix} 1 & 0 \\ 0 & -1 \end{pmatrix}, ~ (\rho)\phi=I_{12} =\begin{pmatrix} 0 & 1 \\ 1 & 0 \end{pmatrix}.$$

This representation comes from the natural homomorphism $Aut(F_n) \to \rm GL(n, \mathbb{Z})$. We prove the following result.

\begin{proposition}\label{repfaith}
The representation $\phi$ is faithful.
\end{proposition}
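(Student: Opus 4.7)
The plan is to exploit the semidirect product decomposition $\Pi A_2 = E\Pi A_2 \rtimes ES_2$ and handle the two factors separately, then glue them together via a torsion argument.

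First, I would examine $\phi$ restricted to $E\Pi A_2 = \langle \mu_{12},\mu_{21}\rangle \cong F_2$. The images
$$
A = \begin{pmatrix} 1 & 2 \\ 0 & 1 \end{pmatrix}, \qquad B = \begin{pmatrix} 1 & 0 \\ 2 & 1 \end{pmatrix}
$$
are precisely the generators appearing in Sanov's classical theorem, which asserts that $\langle A,B\rangle$ is a free subgroup of $\mathrm{SL}(2,\mathbb{Z})$ of rank $2$. Since $E\Pi A_2$ is itself free of rank $2$ on $\mu_{12},\mu_{21}$ and the surjection onto $\langle A,B\rangle$ sends free basis to free basis, $\phi|_{E\Pi A_2}$ is an isomorphism onto $\langle A,B\rangle$.

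Next, I would restrict $\phi$ to $ES_2 = \langle \sigma,\delta,\rho\rangle$. The group $ES_2$ is the dihedral group of order $8$, with $\sigma,\delta$ commuting involutions and $\rho$ swapping them. Its image under $\phi$ is generated by the diagonal sign matrices $I_1,I_2$ and the swap matrix $I_{12}$, and enumerating the eight products produces eight distinct signed permutation matrices. Hence $\phi|_{ES_2}$ is injective.

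Finally, I would show that the two restrictions interact trivially. Let $g = f\lambda \in \ker\phi$ with $f \in E\Pi A_2$ and $\lambda \in ES_2$; then $\phi(f) = \phi(\lambda)^{-1}$, so this common matrix lies in $\phi(E\Pi A_2) \cap \phi(ES_2) \subseteq \langle A,B\rangle \cap (\text{signed permutation matrices})$. The right-hand factor is a finite group in which every non-identity element has finite order, while by Sanov's theorem $\langle A,B\rangle$ is free of rank $2$, hence torsion-free. The only common element is therefore the identity, forcing $\phi(f) = I$ and $\phi(\lambda) = I$, and by the two injectivity statements already established, $f = 1$ and $\lambda = 1$. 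Thus $\ker\phi$ is trivial.

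The only non-trivial input is Sanov's theorem; once that is invoked, the core obstacle is the potential overlap between the images of $E\Pi A_2$ and $ES_2$, which is cleanly resolved by the torsion-versus-torsion-free dichotomy above.
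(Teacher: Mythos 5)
Your proof is correct and follows essentially the same route as the paper: verify faithfulness on $ES_2$ by enumerating the eight signed permutation matrices, invoke Sanov's theorem on $E\Pi A_2$, and then treat a general element $f\lambda$ of the kernel. In fact your torsion-versus-torsion-free argument for why $\phi(E\Pi A_2)\cap\phi(ES_2)$ is trivial supplies the justification for the step that the paper states without proof (``But then $(f)\phi=1$ and $(\lambda)\phi=1$''), so your write-up is, if anything, more complete at the gluing step.
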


\begin{proof}
We know that
$$ES_2=\{ 1, \sigma, \delta, \sigma \delta, \rho, \sigma \rho, \delta \rho, \sigma \delta \rho \}.$$
Consider the $\phi$ images of elements of $ES_2$.
It is easy to see that
$$I_1 I_2=\begin{pmatrix} -1 & 0 \\ 0 & -1 \end{pmatrix}, ~ I_1 I_{12} = \begin{pmatrix} 0 & -1 \\ 1 & 0 \end{pmatrix},$$
$$I_2 I_{12} = \begin{pmatrix} 0 & 1 \\ -1 & 0 \end{pmatrix}, ~I_1 I_2 I_{12} = \begin{pmatrix} 0 & -1 \\ -1 & 0 \end{pmatrix}.$$
Thus $\phi|_{ES_2}$ is faithful. Further, the representation $\phi|_{E \Pi A_2}$ is the classical Sanov representation that is well-known to be faithful \cite{sa}.

Now, consider an arbitrary element $f \lambda \in \Pi A_2$, where $f \in E \Pi A_2$ and $\lambda \in ES_2$. Suppose that $(f \lambda)\phi=1$. But then $(f)\phi=1$ and $(\lambda)\phi=1$. By the above two cases, $f=1$ and $\lambda=1$. Hence kernel of $\phi$ is trivial. This proves the result.
\end{proof}

We are unable to answer the following interesting problem.

\begin{problem}
Is $\Pi A_n$ linear for $n \geq 3$?
\end{problem}

Given a matrix $M$ in ${\rm GL}(2, \Z)$, we would like to understand the conditions under which $M$ lies in the image of $\phi$. The following result answers this question.

\begin{proposition}
A matrix $M$ in ${\rm GL}(2, \Z)$ lies in the image of $\phi$ if and only if it has one of the following eight forms
$$\pm \begin{pmatrix} 1 + 4a_{11} &   2a_{12} \\  2a_{21} &  1+4a_{22} \end{pmatrix}, 
\pm \begin{pmatrix}  1 + 4a_{11} &  -2a_{12} \\ 2a_{21} &  -1-4a_{22} \end{pmatrix},$$
$$\pm \begin{pmatrix} 2a_{12} & -1-4a_{11} \\ 1 +4a_{22} & - 2a_{21}\end{pmatrix}, \ 
\pm \begin{pmatrix} 2a_{12} & 1+4a_{11} \\ 1 +4a_{22} & 2a_{21}\end{pmatrix},$$
for some integers $a_{11}$, $a_{12}$, $a_{21}$ and $a_{22}$ satisfying $a_{11} + a_{22}-a_{12} a_{21} +4 a_{11} a_{22}=0$.
\end{proposition}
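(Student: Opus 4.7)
The plan is to exploit the Collins decomposition $\Pi A_2 = E\Pi A_2 \rtimes ES_2$. Every element of $\Pi A_2$ factors uniquely as $f\lambda$ with $f \in E\Pi A_2$ and $\lambda \in ES_2$, so its image under $\phi$ is the product $(f)\phi \cdot (\lambda)\phi$. Hence the image of $\phi$ is the union of eight right cosets of the Sanov subgroup $H := (E\Pi A_2)\phi$, indexed by $ES_2 = \{1,\sigma,\delta,\sigma\delta,\rho,\sigma\rho,\delta\rho,\sigma\delta\rho\}$. Since $(\sigma\delta)\phi = -I$, these eight cosets pair up by a global sign into four $\pm$-families, which is the source of the four shapes in the statement.

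The key input is Sanov's classical theorem \cite{sa}, which I would invoke to identify $H$ as precisely the set of matrices in ${\rm SL}(2,\Z)$ that reduce to the identity modulo $2$ and whose diagonal entries are congruent to $1$ modulo $4$. Equivalently, $H$ consists of all matrices of the form $\begin{pmatrix} 1+4a_{11} & 2a_{12} \\ 2a_{21} & 1+4a_{22} \end{pmatrix}$ of determinant $1$, and expanding this determinant yields exactly the relation $a_{11}+a_{22}-a_{12}a_{21}+4a_{11}a_{22}=0$ asserted in the proposition. This already produces the first family, corresponding to $\lambda \in \{1, \sigma\delta\}$. For the three remaining pairs $\{\sigma,\delta\}$, $\{\sigma\rho,\delta\rho\}$ and $\{\rho,\sigma\delta\rho\}$, I would take a generic $S \in H$ and compute $S\cdot I_2$, $S \cdot I_1 I_{12}$ and $S \cdot I_{12}$ respectively; a one-line matrix multiplication in each case produces the second, third and fourth families, with the mate of each pair supplied by the central factor $-I$.

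For the converse direction, note that a matrix $M \in {\rm GL}(2,\Z)$ of any of the four shapes is pinned down to a unique family by two binary invariants: the parity of its diagonal entries (odd for families one and two, even for families three and four) and the sign of its determinant ($+1$ for families one and three, $-1$ for the other two). These invariants single out a unique $\lambda \in ES_2$ such that $M \cdot ((\lambda)\phi)^{-1}$ has the Sanov shape above, and hence lies in $H$; consequently $M$ lies in the image of $\phi$. The main obstacle is not the case analysis, which is mechanical once the bookkeeping is set up, but the invocation of the Sanov characterization: one must know not just that $H$ is free of rank two on $(\mu_{12})\phi$ and $(\mu_{21})\phi$, but also that it exhausts every ${\rm SL}(2,\Z)$ matrix with the specified congruence pattern, a surjectivity statement typically proved by a Euclidean-reduction (ping-pong) argument on the entries.
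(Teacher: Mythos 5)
Your proposal is correct and follows essentially the same route as the paper: decompose an arbitrary element of the image as $M_o S$ with $M_o \in (E\Pi A_2)\phi$ and $S \in (ES_2)\phi$, identify $(E\Pi A_2)\phi$ with the matrices $\begin{pmatrix} 1+4a_{11} & 2a_{12} \\ 2a_{21} & 1+4a_{22}\end{pmatrix}$ of determinant one, and multiply by the eight matrices of $(ES_2)\phi$ to produce the four $\pm$-families. The only real difference is one of care rather than method: you explicitly isolate the surjectivity half of the Sanov characterization (and the parity/determinant invariants needed for the converse), whereas the paper compresses that step into ``one can prove by induction.''
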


\begin{proof}
It follows from \cite{ne} that if $M \in (\Pi A_2)\phi$, then every row of $M$ has one even element and one odd element. One can prove by induction that, any matrix in $(E \Pi A_2)\phi$ has the form
$$M_o=\begin{pmatrix} 1 + 4 a_{11} & 2 a_{12} \\
2 a_{21} & 1 + 4 a_{22} \end{pmatrix} \in {\rm SL}(2, \Z)$$  for some integers $a_{11}$, $a_{12}$, $a_{21}$ and $a_{22}$.
In particular, $(1 + 4 a_{11})(1 + 4 a_{22})-4a_{12} a_{21}=1$, that is,
$$a_{22} + a_{11} -a_{12} a_{21} + 4 a_{11} a_{22} =0.$$
An arbitrary matrix $M \in (\Pi A_2)\phi$ has the form $M=M_o S$ where
$$ S \in (ES_2)\phi=\{ I, I_1, ~ I_2, ~ I_1 I_2, ~ I_{12}, ~ I_1 I_{12},~  I_2 I_{12}, ~ I_1 I_2 I_{12} \}.$$ Here $I$ denotes identity matrix. Now computing $M_o S$ for each  $S \in (ES_2)\phi$ gives the desired result.
\end{proof}

Recall that, the Sanov representation of free group $F_2=\langle x, y \rangle$ is given by $$x \mapsto \begin{pmatrix} 1 & 2 \\ 0 & 1 \end{pmatrix}, ~ y \mapsto \begin{pmatrix} 1 & 0 \\ 2 & 1 \end{pmatrix}.$$ It is a natural problem whether this can be generalized to a representation for free group $F_n$ of rank $n\geq 3$.

For $n \geq 1$ and $a \in \mathbb{Z}$, let $t_{ij}(a) \in {\rm SL}(n,  \Z)$ be the matrix whose $(i, j)$th entry is $a$, all diagonal entries are one and all other entries are zero.  Let
$$H_n^a= \langle t_{ij}(a)~|~1 \leq i \neq j \leq n\rangle \leq {\rm SL}(n, \mathbb{Z}).$$
Note that, if $a=2$, then $H_n^2$ is the image of $E \Pi A_n$ under the natural epimorphism from $Aut(F_n)$ to $\rm GL(n, \mathbb{Z})$.

\begin{proposition}
The representation $\phi: E\Pi A_n \to H_n^2$ given by $\mu_{ij} \mapsto t_{ij}(a)$ is not faithful for all $n \geq 3$.
\end{proposition}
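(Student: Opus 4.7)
The plan is to exhibit an explicit nontrivial element in $\ker \phi$, which is possible as soon as we have three distinct indices available. Concretely, I would consider the commutator $c = [\mu_{ij}, \mu_{ik}]$ for pairwise distinct $i, j, k \in \{1, \dots, n\}$ (requiring $n \geq 3$) and show that $c$ lies in $E\Pi A_n \setminus \{1\}$ but is killed by $\phi$.

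The first step is to check that $\phi(c) = I$. Writing $t_{ij}(2) = I + 2E_{ij}$ and $t_{ik}(2) = I + 2E_{ik}$, one has $E_{ij}E_{ik} = 0$ (since the column index $j$ of $E_{ij}$ differs from the row index $i$ of $E_{ik}$) and similarly $E_{ik}E_{ij} = 0$. Hence
\[
 t_{ij}(2)\,t_{ik}(2) \;=\; I + 2E_{ij} + 2E_{ik} \;=\; t_{ik}(2)\,t_{ij}(2),
\]
so $[\phi(\mu_{ij}), \phi(\mu_{ik})] = I$ in $H_n^2$. The same calculation works verbatim for $H_n^a$ with any integer $a$.

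The second step is to verify that $c$ is a nontrivial automorphism of $F_n$. Since $\mu_{ij}$ and $\mu_{ik}$ both fix every $x_\ell$ with $\ell \neq i$, it suffices to compare their composites on $x_i$. Evaluating left-to-right, one finds
\[
(x_i)\mu_{ij}\mu_{ik} \;=\; x_j x_k x_i x_k x_j, \qquad (x_i)\mu_{ik}\mu_{ij} \;=\; x_k x_j x_i x_j x_k,
\]
and these two freely reduced words are manifestly different in $F_n$. Thus $\mu_{ij}\mu_{ik} \neq \mu_{ik}\mu_{ij}$ as automorphisms, so $c \neq 1$ in $E\Pi A_n$. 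Combining the two steps gives $c \in \ker \phi \setminus \{1\}$, proving non-faithfulness for all $n \geq 3$.

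There is no serious obstacle; the argument is essentially a one-line matrix observation paired with a one-line word computation. The conceptual content is simply that the defining commutation relations of $E\Pi A_n$ cover only \emph{disjoint} pairs $\{i,j\} \cap \{k,l\} = \emptyset$ and pairs $\mu_{ik},\mu_{jk}$ with a shared second index, while the elementary matrices $t_{ij}(2)$ enjoy the stronger commutation $[t_{ij}(2), t_{ik}(2)] = I$ from a shared \emph{first} index as well; this extra commutation forces new relations in the image and hence a nontrivial kernel.
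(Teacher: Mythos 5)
Your argument is correct and is essentially the paper's own proof: the paper likewise observes that $\mu_{23}\mu_{21}\neq\mu_{21}\mu_{23}$ in $E\Pi A_3$ while $t_{23}(2)t_{21}(2)=t_{21}(2)t_{23}(2)$, i.e.\ it uses exactly your commutator $[\mu_{ij},\mu_{ik}]$ of two generators sharing their first index. The only difference is that you carry out the word and matrix verifications explicitly (and work directly in $E\Pi A_n$ rather than reducing to $n=3$), which the paper leaves as an observation.
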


\begin{proof}
It is enough to prove the result for $n=3$. Observe that $\mu_{23} \mu_{21} \neq \mu_{21} \mu_{23}$ in $E \Pi A_3$. But $t_{23}(2) t_{21}(2)=t_{21}(2) t_{23}(2)$ in ${\rm SL}(3, \Z)$. Hence $\phi$ is not faithful.
\end{proof}

The following problem seems interesting.

\begin{problem}
Describe the kernel of the map $\phi$.
\end{problem}
\bigskip

Let $IA_n$ denote the group of those automorphisms of $F_n$ that induce identity on the abelianization of $F_n$. Then we have the following
short exact sequence 
\begin{equation} \label{se1} 1 \to IA_n \to Aut(F_n) \stackrel{\psi}{\to} {\rm GL}(n,  \mathbb{Z}) \to 1.\end{equation}

As remarked in \cite{Collins} (see also \cite{Glover}), the image of $\Pi A_n$ in $\rm GL(n, \mathbb{Z})$ is the subgroup of $\rm GL(n, \mathbb{Z})$
consisting of invertible matrices where each column has exactly one odd entry and the rest are even. The subgroup is the semidirect product
$\Gamma_2(\mathbb{Z}) \rtimes S_n$, where $S_n$ is the symmetric group on $n$ symbols and $\Gamma_2(\mathbb{Z})$ is the level 2-congruence subgroup defined by the short exact sequence
$$1 \to \Gamma_2(\mathbb{Z}) \to \rm GL(n, \mathbb{Z}) \to {\rm GL}(n,  \mathbb{Z}/2) \to 1.$$
It is well known (see for example \cite{Fullarton}) that $\Gamma_2(\mathbb{Z})$ is generated by transvections $t_{ij}(2)$ and the diagonal matrices
$d_i$ for $1 \leq i \leq n $ which differ from the identity only in having $-1$ in the $(i,i)$th position.

It is easy to see that the matrix of the automorphism induced by $\mu_{ij}$ on $\mathbb{Z}^n$ with respect to the standard basis of $\mathbb{Z}^n$ is $t_{ij}(2)$ for all $1 \leq i \neq j \leq n$. Thus, $\psi \big(E \Pi A_n\big)=H_n^2$ for all $n \geq 2$. This can be thought of as a generalization of Sanov representation. The case $n=2$ gives the standard Sanov representation.

\begin{proposition}
$\psi|_{E\Pi A_2}: E\Pi A_2 \to H_2^2$ is an isomorphism.
\end{proposition}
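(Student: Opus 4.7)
The plan is to assemble the statement from two ingredients that are already essentially available in the paper: the abstract structure of $E\Pi A_2$ as a free group of rank two, and the classical Sanov theorem on the faithfulness of the representation of $F_2$ into $\mathrm{SL}(2,\Z)$ generated by the matrices $\begin{pmatrix}1 & 2\\0 & 1\end{pmatrix}$ and $\begin{pmatrix}1 & 0\\2 & 1\end{pmatrix}$.

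First I would note surjectivity. By construction, $H_2^2=\langle t_{12}(2),\,t_{21}(2)\rangle$, and we have already observed (in the discussion just before the proposition) that the matrix of $\mu_{ij}$ acting on $\Z^2$ is exactly $t_{ij}(2)$. Thus $\psi(\mu_{12})=t_{12}(2)$ and $\psi(\mu_{21})=t_{21}(2)$, so $\psi|_{E\Pi A_2}$ hits every generator of $H_2^2$, and surjectivity follows.

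Next I would establish injectivity. The paper has already recorded the isomorphism $E\Pi A_2=\langle \mu_{12},\mu_{21}\rangle\cong F_2$, with $\{\mu_{12},\mu_{21}\}$ a free basis. Under $\psi|_{E\Pi A_2}$, this free basis is sent to the pair $\left(\begin{pmatrix}1 & 2\\0 & 1\end{pmatrix},\begin{pmatrix}1 & 0\\2 & 1\end{pmatrix}\right)$, which is precisely the pair of generators for Sanov's representation of $F_2$. Sanov's theorem \cite{sa} states that this representation is faithful, and this was already invoked inside the proof of \propref{repfaith}. Therefore $\psi|_{E\Pi A_2}$ has trivial kernel.

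There is no real obstacle here: both halves are essentially quotation. The only thing to be careful about is that one invokes the correct ``abstract'' description of $E\Pi A_2$ as free on $\mu_{12},\mu_{21}$ rather than treating it as a subgroup of $\mathrm{Aut}(F_2)$ with unknown relations, because it is only upon identifying the two free generators with the two Sanov matrices that Sanov's theorem applies directly. Combining surjectivity and injectivity yields the claimed isomorphism.
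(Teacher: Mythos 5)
Your proof is correct, but it takes a slightly different route from the paper's. You split the statement into surjectivity (generators go to generators) and injectivity (the free basis $\{\mu_{12},\mu_{21}\}$ of $E\Pi A_2\cong F_2$ is sent to the two Sanov matrices, so $\psi|_{E\Pi A_2}$ \emph{is} the Sanov representation up to this identification, hence faithful). The paper instead observes that Sanov's theorem gives $H_2^2\cong F_2$, so $\psi|_{E\Pi A_2}$ is an epimorphism from $F_2$ onto $F_2$, and then concludes by the Hopfian property of finitely generated free groups. The two uses of Sanov's theorem are logically equivalent (``the two matrices freely generate a free group of rank $2$'' versus ``the representation sending free generators to them is faithful''), so neither argument is more economical in its inputs; yours is more direct, while the Hopficity argument is more robust in that it only needs surjectivity together with the abstract isomorphism types of the two groups. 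One consequence of the difference: the paper follows this proposition with the remark that it ``yields another proof of Proposition~\ref{repfaith}.'' That remark is justified precisely because the Hopfian argument does not re-invoke the faithfulness of the Sanov representation of $E\Pi A_2$ as a black box in the way the proof of Proposition~\ref{repfaith} does; your injectivity step quotes that faithfulness directly, so under your proof the proposition would not supply a genuinely independent second proof of faithfulness. This is a stylistic rather than a mathematical defect --- your argument is complete and valid as a proof of the stated isomorphism.
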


\begin{proof}
By Sanov's theorem $H_2^2=\langle t_{12}(2), t_{21}(2) \rangle \cong F_2$. Thus $\psi|_{E\Pi A_2}$ is an epimorphism onto $F_2$. But $E\Pi A_2 \cong F_2$ and $F_2$ being Hopfian implies that $\psi|_{E\Pi A_2}$ is an isomorphism.
\end{proof}

This yields another proof of Proposition \ref{repfaith}.
\bigskip

\section{Residual nilpotence of $\Pi A_n$}\label{sec5}
In this section, we investigate the residual nilpotence of $\Pi A_n$ for $n \geq 2$. In \cite{ne2}, Nekritsuhin proved that $\Pi A_2$ is residually nilpotent. We prove the following.

\begin{proposition}
$\Pi A_n$ is not residually nilpotent for $n \geq 3$.
\end{proposition}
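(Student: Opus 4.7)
The plan is to reduce to showing that $E\Pi A_n$ fails to be residually nilpotent, since residual nilpotence passes to subgroups. For $n \geq 3$, the defining relation $\mu_{ik}\mu_{jk}\mu_{ij}=\mu_{ij}\mu_{jk}\mu_{ik}^{-1}$---which involves three distinct indices and so is active precisely when $n \geq 3$---combined with the commutation $\mu_{ik}\mu_{jk}=\mu_{jk}\mu_{ik}$, yields the identity
\[
\mu_{ik}^{2}=[\mu_{ij},\mu_{jk}]\cdot[\mu_{ij},\mu_{ik}]^{\mu_{jk}}.
\]
Hence $\mu_{ik}^2\in\gamma_2(E\Pi A_n)$ for every pair $i\neq k$, and the abelianization of $E\Pi A_n$ is an elementary abelian $2$-group (in sharp contrast with the case $n=2$, where $E\Pi A_2\cong F_2$ has free abelianization).

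Next I would exhibit a non-trivial element of $\bigcap_{k\geq 1}\gamma_k(E\Pi A_n)$. The natural strategy is to locate a non-trivial normal subgroup $N\trianglelefteq E\Pi A_n$ satisfying $N=[N,E\Pi A_n]$, since any such $N$ automatically lies in every term of the lower central series and thus provides the desired witness. A candidate is the normal closure of the squares $\{\mu_{ij}^2\}$: by the displayed identity it is generated by commutators, and by iterating the identity across distinct triples $(i,j,k)$ one should obtain that this normal closure coincides with its commutator with $E\Pi A_n$. Non-triviality of the witness then follows from the torsion-freeness of $E\Pi A_n$ established by Glover and Jensen.

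The main obstacle is the iterative bootstrap: the identity above only places $\mu_{ik}^2$ in $\gamma_2$ directly, and pushing the squares into arbitrarily deep $\gamma_k$ requires combining the identity with the $2$-torsion structure of each graded piece $L_k := \gamma_k(E\Pi A_n)/\gamma_{k+1}(E\Pi A_n)$ (each of which is a quotient of a tensor power of the elementary abelian $2$-group $E\Pi A_n^{\mathrm{ab}}$, hence itself $2$-torsion). A concrete analysis of a small nilpotent quotient such as $E\Pi A_n/\gamma_3$ is likely needed to pin down the precise perfect normal subgroup and make the argument rigorous.
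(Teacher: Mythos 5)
There is a genuine gap here, and it stems from the choice of subgroup. Your opening reduction is sound --- residual nilpotence does pass to subgroups, since $\gamma_k(H)\leq\gamma_k(G)$ --- but you then aim at $E\Pi A_n$, which is the hardest possible target. The paper instead observes that $\Pi A_n=E\Pi A_n\rtimes ES_n$ contains the finite subgroup $S_3$ once $n\geq 3$ (generated by the $\alpha_{i,i+1}$), and $S_3$ is visibly not residually nilpotent because $\gamma_r(S_3)=A_3\neq 1$ for all $r\geq 2$. That two-line argument finishes the proof. By contrast, whether $E\Pi A_n$ is residually nilpotent is left open in the paper: the authors explicitly flag it as ``interesting to investigate,'' prove that the closely related groups $\widetilde{U_3}$ and $\widetilde{L_3}$ \emph{are} residually nilpotent, and conjecture the same for all $\widetilde{U_n}$. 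So you should not expect your route to close easily, and it may not close at all.

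Concretely, the only step you actually establish is that $\mu_{ik}^2\in E\Pi A_n'=\gamma_2(E\Pi A_n)$ (which is correct and consistent with the paper's computation $E\Pi A_3/E\Pi A_3'\cong(\mathbb{Z}/2)^6$). The crucial claim --- that the normal closure $N$ of the squares satisfies $N=[N,E\Pi A_n]$ --- is asserted, not proved, and it does not follow from your displayed identity: that identity expresses $\mu_{ik}^2$ as a product of commutators of \emph{generators}, not as a product of commutators $[n,g]$ with $n\in N$, so it only gives $N\leq\gamma_2$, not $N\leq[N,E\Pi A_n]$. Your own closing paragraph concedes that the ``iterative bootstrap'' is missing; that bootstrap is precisely the content of the statement you are trying to prove, so as written the argument is circular at its core. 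To repair the proof with minimal change, keep the subgroup reduction but apply it to $S_3\leq ES_n\leq\Pi A_n$ rather than to $E\Pi A_n$.
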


\begin{proof}
By the result of Collins, we have $\Pi A_n= E\Pi A_n \rtimes ES_n$. In particular, $\Pi A_n$ contains the symmetric group $S_3$ for $n \geq 3$. But $S_3$ is not residually nilpotent, since $\gamma_1(S_3)=S_3$ and $\gamma_r(S_3)=A_3$ for $r \geq 2$. Hence $\Pi A_n$ is not residually nilpotent for $n \geq 3$.
\end{proof}

It is interesting to investigate residual nilpotence of $E \Pi A_n$ for $n \geq 3$. Let $t_{ij}=t_{ij}(2)$ for $1 \leq i \neq j \leq n$,
$$U_n= \langle t_{ij}~|~ 1 \leq i < j \leq n\rangle$$
and
$$L_n= \langle t_{ij}~|~ 1 \leq j < i \leq n\rangle.$$ Then $U_n \cong L_n$ and  $H_n = \langle U_n,L_n \rangle$. Let $R$ be the set of defining relations of $E \Pi A_n$. Let $B_n =\{ \mu_{ij} \ | \ 1\leq i < j \leq n\}$ and $C_n =\{ \mu_{ij} \ | \ 1\leq j< i \leq n\}$. Using these, we define
$$\widetilde{ U_n} =\langle B_n~||~ R\rangle~\textrm{and}~\widetilde{ L_n} =\langle C_n~||~ R\rangle.$$

Specializing to the case $n=3$ gives $$\widetilde{U_3}= \langle \mu_{12}, \mu_{13}, \mu_{23}~||~ [\mu_{13}, \mu_{23}]=1, \mu_{13}\mu_{23}\mu_{12}= \mu_{12}\mu_{23}\mu_{13}^{-1}\rangle$$
and
$$\widetilde{L_3}= \langle \mu_{21}, \mu_{31}, \mu_{32}~||~ [\mu_{21}, \mu_{31}]=1, \mu_{21}\mu_{31}\mu_{23}= \mu_{23}\mu_{31}\mu_{21}^{-1}\rangle.$$
Further observe that $ \widetilde{U_3} \cong  \widetilde{ L_3}$ and $E \Pi A_3 = \langle \widetilde{U_3}, \widetilde{ L_3}\rangle$.

\begin{proposition}
$\widetilde{U_3}/ \langle [\mu_{12}, \mu_{13}] \rangle \cong U_3$ and $\widetilde{L_3}/ \langle [\mu_{31}, \mu_{32}] \rangle \cong L_3$.
\end{proposition}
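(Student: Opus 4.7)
The plan is to realize both isomorphisms by constructing an explicit surjection onto $U_3$ (respectively $L_3$) from the quotient and then establishing injectivity via a normal-form argument. I focus on the case $\widetilde{U_3}/\langle\langle[\mu_{12},\mu_{13}]\rangle\rangle \cong U_3$; the statement for $\widetilde{L_3}$ follows by the symmetry swapping upper and lower indices throughout.

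First I would define $\psi\colon \widetilde{U_3} \to U_3$ by $\mu_{ij}\mapsto t_{ij}$ and verify well-definedness by checking both defining relations of $\widetilde{U_3}$ hold for the transvections: $[t_{13},t_{23}]=1$ is immediate from the matrix form, and $t_{13}t_{23}t_{12}=t_{12}t_{23}t_{13}^{-1}$ is equivalent to the Steinberg identity $[t_{12}(2),t_{23}(2)]=t_{13}(4)$, a quick $3\times 3$ matrix calculation. Since $[t_{12},t_{13}]=1$ in $U_3$, the commutator $[\mu_{12},\mu_{13}]$ lies in $\ker\psi$, so $\psi$ descends to a surjection $\bar\psi$ from the quotient onto $U_3$.

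Next I would rewrite the presentation of the quotient. Adjoining $[\mu_{12},\mu_{13}]=1$ to the original relations of $\widetilde{U_3}$ makes $\mu_{13}$ central, and the relation $\mu_{13}\mu_{23}\mu_{12}=\mu_{12}\mu_{23}\mu_{13}^{-1}$ then simplifies, after shifting the now-central $\mu_{13}$'s, to $[\mu_{12},\mu_{23}]=\mu_{13}^{2}$. So the quotient is presented by
\[
\langle \mu_{12},\mu_{13},\mu_{23} \mid [\mu_{12},\mu_{13}]=[\mu_{13},\mu_{23}]=1,\; [\mu_{12},\mu_{23}]=\mu_{13}^{2}\rangle,
\]
a standard two-step nilpotent presentation. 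Using centrality of $\mu_{13}$ and the commutator identity, every word admits a collected form $\mu_{12}^{i}\mu_{23}^{j}\mu_{13}^{k}$ with $i,j,k\in\mathbb{Z}$.

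Finally, to conclude $\bar\psi$ is injective, I would compute $\bar\psi(\mu_{12}^{i}\mu_{23}^{j}\mu_{13}^{k}) = t_{12}^{i}t_{23}^{j}t_{13}^{k}$ explicitly as a $3\times 3$ matrix whose superdiagonal entries are $2i$ and $2j$ and whose $(1,3)$-entry is $2k+4ij$; this equals the identity only when $i=j=k=0$. The main technical point I anticipate is ensuring the collected form is genuinely unique rather than merely existing; the cleanest remedy is to construct an auxiliary group on the set $\mathbb{Z}^3$ with multiplication $(i_1,j_1,k_1)(i_2,j_2,k_2) = (i_1+i_2,\,j_1+j_2,\,k_1+k_2-2i_2 j_1)$, check that it satisfies the displayed presentation, and use the resulting homomorphism to pin down the normal form in the quotient. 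For $\widetilde{L_3}$, replacing every $\mu_{ij}$ and $t_{ij}$ by $\mu_{ji}$ and $t_{ji}$ (and transposing matrices) turns the entire argument into a proof of $\widetilde{L_3}/\langle\langle[\mu_{31},\mu_{32}]\rangle\rangle\cong L_3$.
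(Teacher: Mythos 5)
Your proposal is correct and follows essentially the same route as the paper: map $\mu_{ij}\mapsto t_{ij}(2)$, observe that modulo $[\mu_{12},\mu_{13}]$ the long relation collapses to $[\mu_{12},\mu_{23}]=\mu_{13}^2$, and match this against the relations $[t_{13},t_{23}]=[t_{12},t_{13}]=1$, $[t_{12},t_{23}]=t_{13}^2$ holding in $U_3$. The only difference is that the paper declares the resulting isomorphism ``clear,'' whereas you supply the omitted injectivity argument via the collected normal form $\mu_{12}^i\mu_{23}^j\mu_{13}^k$ and the explicit matrix computation.
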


\begin{proof}
First note that, we have $[t_{13}, t_{23}]=1$, $[t_{12}, t_{13}]=1$ and $[t_{12},t_{23}]=t_{13}^2$ in $U_3$. Also note that, $\mu_{13}\mu_{23}\mu_{12}= \mu_{12}\mu_{23}\mu_{13}^{-1}$ is equivalent to the relation $\mu_{13}[\mu_{13},\mu_{12}]\mu_{13}=[\mu_{12},\mu_{23}]$. It is now clear that the map $\phi$ induces the desired isomorphism $\widetilde{U_3}/ \langle [\mu_{12}, \mu_{13}] \rangle \cong U_3$. The other isomorphism can be proved analogously.
\end{proof}

\begin{theorem}
$\widetilde{U_3}$ and $\widetilde{L_3}$ are residually nilpotent.
\end{theorem}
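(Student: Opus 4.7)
The plan is to rewrite $\widetilde{U_3}$ as an HNN extension and then combine the natural nilpotent quotient of the preceding proposition with additional nilpotent quotients built from the HNN structure. First, I would pass to the change of generators $a := \mu_{12}$, $b := \mu_{13}$, $d := \mu_{13}\mu_{23}$. Under this substitution the relation $[\mu_{13}, \mu_{23}] = 1$ becomes $[b, d] = 1$, and the relation $\mu_{13}\mu_{23}\mu_{12} = \mu_{12}\mu_{23}\mu_{13}^{-1}$ rewrites as $a^{-1}da = db^{-2}$. Thus
\[\widetilde{U_3} \cong \langle a, b, d \mid [b, d] = 1,\ a^{-1}da = db^{-2}\rangle,\]
which I would recognise as the HNN extension of $\mathbb{Z}^2 = \langle b, d\rangle$ with stable letter $a$ and associated infinite cyclic subgroups $\langle d\rangle$ and $\langle db^{-2}\rangle$, via the isomorphism $d \mapsto db^{-2}$.

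Now fix a nontrivial $g \in \widetilde{U_3}$. The preceding proposition supplies an epimorphism $\pi : \widetilde{U_3} \to U_3$ with kernel $K$ equal to the normal closure of $[\mu_{12}, \mu_{13}]$; since $U_3 \leq \mathrm{UT}_3(\mathbb{Z})$ is nilpotent of class $2$, this gives $\gamma_3(\widetilde{U_3}) \subseteq K$. If $\pi(g) \ne 1$, then $g \notin \gamma_3(\widetilde{U_3})$ and $g$ is already separated from $1$ in the $2$-step nilpotent quotient $U_3$. The remaining and more delicate case is when $g \in K \setminus \{1\}$.

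For such $g$ my strategy is to build a sequence of further nilpotent representations $\rho_n : \widetilde{U_3} \to \mathrm{UT}_{N(n)}(\mathbb{Z})$ by iterating the unitriangular matrix model of $U_3$, exploiting the fact that the HNN twist $d \mapsto db^{-2}$ is implemented by conjugation by an element of the base group $\mathbb{Z}^2$ and can therefore be realised unipotently, keeping each $\rho_n$ unitriangular. These representations produce nilpotent quotients of increasing class, and what must be shown is that they jointly detect every nontrivial element of $K$; equivalently, that $\bigcap_n \gamma_n(\widetilde{U_3}) = 1$. This can be approached by commutator calculus within the HNN presentation, using Britton's lemma to control normal forms of elements of $K$ and tracking their depth in the lower central series.

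The main obstacle is exactly this inductive construction of nilpotent representations and the verification that they separate all of $K$. Unlike a Baumslag--Solitar group, where the analogous HNN twist is a non-unipotent rescaling and residual nilpotence famously fails, here the twist lies purely in the complementary direction $b^{-2}$ within $\mathbb{Z}^2$, and this unipotent feature is what allows the argument to go through. The residual nilpotence of $\widetilde{L_3}$ then follows immediately from the isomorphism $\widetilde{L_3} \cong \widetilde{U_3}$ noted just before the proposition.
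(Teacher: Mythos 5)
Your reduction to an HNN extension is exactly the decomposition the paper uses (up to relabelling: the paper takes $a=\mu_{13}\mu_{23}$, $b=\mu_{13}$, stable letter $c=\mu_{12}$, with $c^{-1}ac=b^{-2}a$, which is your $a^{-1}da=db^{-2}$ after renaming), and your observation that the twist is ``unipotent'' rather than a Baumslag--Solitar-type rescaling correctly points at why the result is plausible. But the proof has a genuine gap at its core: after disposing of elements with nontrivial image in $U_3$, you reduce the problem to showing that every nontrivial element of $K$ is detected by some nilpotent quotient, i.e.\ that $\bigcap_n\gamma_n(\widetilde{U_3})=1$ --- which for a finitely generated group is precisely the statement of residual nilpotence being proved. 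You then describe the construction of the separating unitriangular representations $\rho_n$ and the verification that they detect all of $K$ as a ``strategy'' and name it as ``the main obstacle'' without carrying it out. Nothing in the proposal actually produces a single nilpotent quotient beyond $U_3$, nor a Britton's-lemma argument controlling the depth of elements of $K$ in the lower central series; so the argument is circular where it matters.

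The paper closes exactly this gap by quoting a theorem of Raptis and Varsos on residual nilpotence of HNN extensions with finitely generated abelian base: with associated cyclic subgroups $A=\langle a\rangle$ and $\langle b^{-2}a\rangle$ inside $\mathbb{Z}^2$, one checks that the iterated intersections $M_{i+1}=\phi^{-1}(M_i)\cap M_i\cap\phi(M_i)$ are trivial from the start (since $A\cap\langle b^{-2}a\rangle=1$), that the associated subgroups are proper and torsion-free, and the theorem then yields residual nilpotence. To complete your proof you would either need to invoke such a criterion, or genuinely construct the tower of nilpotent quotients you allude to and prove it separates $K$; as written, the latter is asserted rather than done.
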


\begin{proof}
Renaming the generators as $a=\mu_{13}\mu_{23}$, $b=\mu_{13}$ and $c=\mu_{12}$, we can write $$\widetilde{U_3}= \langle a,b,c~||~ [a,b]=1, c^{-1}ac=b^{-2}a\rangle.$$ Let $A=\langle a \rangle$ and $B=\langle b \rangle$. Then $\widetilde{U_3}$ is an HNN extension of $A \times B$ by $c$ and relative to the isomorphism $\phi: A \to B$ given by $\phi(a)= b^{-2}a$. Let $M_0= A \cap B$, $M_1= \phi^{-1}(M_0) \cap M_0 \cap  \phi(M_0)$ and inductively $$M_{i+1}= \phi^{-1}(M_i) \cap M_i \cap  \phi(M_i).$$ Since $M_0=1$, it follows that $H= \cap_{i=1}^{\infty}M_i=1$ and hence $\phi(H)=H$. As $A$ and $B$ are proper torsion free subgroups of $A\times B$, a theorem of Raptis and Varsos \cite[Theorem 11]{rv} implies that $\widetilde{U_3}$ is residually nilpotent. Similarly, $\widetilde{L_3}$ is residually nilpotent.
\end{proof}

We suspect that the following is true.

\begin{conjecture}
$\widetilde{U_n}$ and $\widetilde{L_n}$ are residually nilpotent for $n\geq 3$.
\end{conjecture}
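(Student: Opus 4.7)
The plan is to proceed by induction on $n$, with the theorem above serving as the base case $n = 3$. The strategy is to realize $\widetilde{U_n}$ as an iterated HNN extension with a residually nilpotent base and associated subgroups satisfying the hypotheses of the Raptis--Varsos theorem \cite{rv}, exactly mirroring the $n=3$ argument. Since $\widetilde{U_n} \cong \widetilde{L_n}$, it suffices to treat $\widetilde{U_n}$.

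The first observation is that the map $\widetilde{U_n} \to \widetilde{U_{n-1}}$ sending $\mu_{in} \mapsto 1$ for every $i < n$ and fixing $\mu_{ij}$ for $j < n$ is a well-defined split epimorphism: the commuting relations involving some $\mu_{in}$ collapse trivially, and the triangle relations $\mu_{in}\mu_{jn}\mu_{ij} = \mu_{ij}\mu_{jn}\mu_{in}^{-1}$ reduce to $\mu_{ij} = \mu_{ij}$. The obvious section produces a semidirect product decomposition $\widetilde{U_n} \cong K_n \rtimes \widetilde{U_{n-1}}$, where $K_n$ denotes the normal closure of $\{\mu_{in} : i < n\}$. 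The generators $\mu_{in}$ pairwise commute, and for each $i < j < n$, combining the triangle relation with this commutativity yields
$$\mu_{ij}^{-1}(\mu_{in}\mu_{jn})\mu_{ij} = \mu_{in}^{-2}(\mu_{in}\mu_{jn}),$$
which is precisely the Baumslag--Solitar-type HNN relation exploited in the $n = 3$ case. Using these relations, I would express $\widetilde{U_n}$ as an iterated HNN extension in which the $\mu_{ij}$ with $i < j < n$ serve as stable letters, one at a time, over a base that combines $\widetilde{U_{n-1}}$ with the free abelian group $\langle \mu_{in} : i < n \rangle \cong \mathbb{Z}^{n-1}$. At each stage the associated subgroup is generated by commuting words in the $\mu_{in}$, hence is free abelian and torsion-free, and one must verify that these associated subgroups are proper in the current base and that the intersection $\bigcap_i M_i = 1$ required by Raptis--Varsos holds.

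The main obstacle is the bookkeeping inherent in the iterated HNN construction. For $n = 3$ there was only one stable letter and the base was $\mathbb{Z}^2$, so the intersection condition $\bigcap M_i = 1$ was immediate from $M_0 = 1$; for general $n$ the stable letters $\mu_{ij}$ interact through the triangle relations, so one must carefully order the HNN steps and check at each stage that the associated subgroup inherited from the previous stage remains proper and that the iterated intersection of associated subgroups collapses. Should the direct iterated HNN approach become unwieldy, an alternative route is to construct a faithful Magnus-type expansion of $K_n$ into a completed graded object carrying the $\widetilde{U_{n-1}}$-action, and to combine residual nilpotence of the kernel with that of the quotient through a general extension lemma, such as the criterion that the induced $\widetilde{U_{n-1}}$-action on each quotient of the lower central series of $K_n$ is by unipotent transformations.
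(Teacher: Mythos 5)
First, be aware that the paper does \emph{not} prove this statement: it is explicitly labelled a conjecture (``We suspect that the following is true''), and the authors establish only the case $n=3$ in the theorem immediately preceding it. So there is no proof in the paper to compare yours against; your text is a research plan for what the paper leaves open, and it has to be judged on whether it would actually close that gap. As written, it would not.

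Your initial reductions are sound: the retraction $\widetilde{U_n}\to\widetilde{U_{n-1}}$ killing the $\mu_{in}$ is well defined and split, giving $\widetilde{U_n}\cong K_n\rtimes\widetilde{U_{n-1}}$, and the identity $\mu_{ij}^{-1}(\mu_{in}\mu_{jn})\mu_{ij}=\mu_{in}^{-2}(\mu_{in}\mu_{jn})$ is correct. The gap is the iterated HNN step. For $n\geq 4$ the proposed stable letters $\mu_{ij}$ with $i<j<n$ are not free over the base $\langle\mu_{in}: i<n\rangle$ modulo conjugation relations: they satisfy all the relations of $\widetilde{U_{n-1}}$ among themselves, including triangle relations $\mu_{ik}\mu_{jk}\mu_{ij}=\mu_{ij}\mu_{jk}\mu_{ik}^{-1}$ with $k<n$ in which all three letters are ``stable letters''; such relations are not of HNN type, so the claimed iterated HNN decomposition does not exist in the form described. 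Moreover, even if one reorganized $\widetilde{U_n}$ as an iterated HNN extension, the cited Raptis--Varsos theorem applies only when the base is finite or finitely generated abelian; after the first stable letter is adjoined the base is no longer abelian, so the theorem cannot be reapplied at later stages. This is precisely why the paper's argument is confined to $n=3$: there is a single triangle relation, hence a single stable letter $c=\mu_{12}$ over the abelian base $\langle \mu_{13}\rangle\times\langle\mu_{13}\mu_{23}\rangle\cong\mathbb{Z}^2$. Finally, $K_n$ is the \emph{normal closure} of $\{\mu_{in}\}$, not the subgroup they generate; already for $n=3$ it is an infinitely generated ascending union rather than $\mathbb{Z}^{n-1}$, so the fallback route via a unipotent action on the lower central quotients of $K_n$ would first require identifying those quotients, which is not done, and one would still need a genuine extension criterion, since residual nilpotence is not closed under extensions (as $S_3=\mathbb{Z}/3\rtimes\mathbb{Z}/2$ already shows). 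The proposal identifies a reasonable starting point but does not resolve the conjecture, which remains open in the paper.
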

\bigskip

\section{Structure of the Palindromic Torelli Group}\label{sec6}
In \cite{Collins}, Collins observed that the intersection $P I_n=IA_n \cap \Pi A_n$ is non-trivial for $n \geq 3$. For example, the commutator $\null [\mu_{12}, \mu_{13}]$ lies in this intersection. Recently, Fullarton proved the following interesting result.

\begin{theorem}\cite[Theorem A.]{Fullarton} 
The group $P I _n= IA_n \cap \Pi A_n$ is normally generated in $\Pi A_n$  by the automorphisms $\null [\mu_{12}, \mu_{13}]$ and $(\mu_{23}\mu_{13}^{-1}\mu_{31}\mu_{32}\mu_{12}\mu_{21}^{-1})^2$.
\end{theorem}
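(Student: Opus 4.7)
The plan is to realize $PI_n$ as the kernel of the restriction to $E\Pi A_n$ of the natural abelianization homomorphism $\psi:\Pi A_n\to\mathrm{GL}(n,\mathbb{Z})$. Since $\Pi A_n=E\Pi A_n\rtimes ES_n$, and every nontrivial element of $ES_n$ acts as a nontrivial signed permutation on $F_n^{\mathrm{ab}}=\mathbb{Z}^n$, any $\phi\in\Pi A_n\cap IA_n$ must have trivial $ES_n$-component. Hence $PI_n=\ker\bigl(\psi|_{E\Pi A_n}\bigr)$, and the goal reduces to finding normal generators in $\Pi A_n$ for the kernel of the surjection $E\Pi A_n\twoheadrightarrow H_n^{2}$.

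The next step is to compare the Collins presentation of $E\Pi A_n$ on the generators $\mu_{ij}$ with a presentation of $H_n^{2}$ on $t_{ij}(2)$, and list the ``extra'' relations picked up in the quotient. Two families appear: (A) the \emph{same-left-index} commutations $[t_{ij},t_{ik}]=1$, which fail in $E\Pi A_n$; and (B) relations coupling $t_{ij}$ with its ``transpose'' $t_{ji}$. A short manipulation using the ``same-right-index'' commutation and the palindromic triangle relation of Collins shows that, modulo the (A)-family, the Steinberg identities $[t_{ij},t_{jk}]=t_{ik}^{\,2}$ already hold in $H_n^2$, so only (A) and (B) need to be accounted for. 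The lift of any (A)-relation is the commutator $[\mu_{ij},\mu_{ik}]$, and conjugation by the index-permutations $\alpha_{r,r+1}\in ES_n$ identifies every such commutator with a $\Pi A_n$-conjugate of $[\mu_{12},\mu_{13}]$, yielding the first normal generator.

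For the (B)-family the approach I would follow is geometric. Build a simplicial complex $\mathcal{P}_n$ whose vertices are palindromic free bases of $F_n$ (modulo an appropriate equivalence) and whose higher simplices record compatibility between bases, in the spirit of the tethered-disc and Birman--Series complexes used by Day and Putman for Torelli-type groups. The action of $\Pi A_n$ on $\mathcal{P}_n$ has readily-described vertex and edge stabilizers. Prove that $\mathcal{P}_n$ is simply connected by a surgery argument that replaces a non-palindromic Nielsen move occurring along a loop with palindromic ones, shortening the loop. Applying Brown's presentation theorem both to the $\Pi A_n$-action on $\mathcal{P}_n$ and to the induced $H_n^{2}$-action on its image yields a finite list of normal generators for $\ker\bigl(\psi|_{E\Pi A_n}\bigr)$ from the discrepancy between the two sets of relators. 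A bookkeeping argument using the full $ES_n$-conjugation action then collapses the (B)-family relators into a single $\Pi A_n$-conjugacy class, represented by $(\mu_{23}\mu_{13}^{-1}\mu_{31}\mu_{32}\mu_{12}\mu_{21}^{-1})^2$.

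The principal obstacle is the simple-connectedness of $\mathcal{P}_n$: the surgery on loops must be carried out while preserving the palindromic constraint on bases, and it is exactly this combinatorial step that pins down the specific length-$12$ word appearing as the second generator. Once simple-connectedness and the stabilizer description are in hand, the reduction to two normal generators is a finite combinatorial check that exploits the $ES_n$-symmetry, together with the verification, already available from the Collins presentation, that both candidate generators lie in $PI_n$.
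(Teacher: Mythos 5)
First, note that the paper does not prove this statement at all: it is quoted verbatim from Fullarton (Theorem A of \cite{Fullarton}) and is used here only as an input to Theorem~\ref{ptg}. So there is no internal proof to measure your attempt against; the relevant comparison is with Fullarton's own argument, which (as the introduction records) does proceed by constructing a simplicial complex of palindromic bases on which the group acts. Your outline is therefore pointed in the right direction, but it is a plan rather than a proof: every step that carries the mathematical weight is deferred. The complex $\mathcal{P}_n$ is not defined beyond an analogy; its simple-connectivity, which you yourself flag as ``the principal obstacle'', is not established; the vertex and edge stabilizers are only asserted to be ``readily described''; and the ``finite combinatorial check'' that collapses the relator families to two normal generators --- the step that actually produces the specific word $(\mu_{23}\mu_{13}^{-1}\mu_{31}\mu_{32}\mu_{12}\mu_{21}^{-1})^2$ --- is not performed. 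The algebraic preamble also presupposes a presentation of $H_n^2=\psi(E\Pi A_n)$ against which to ``compare'' the Collins presentation; for $n\geq 3$ this is a finite-index subgroup of ${\rm SL}(n,\mathbb{Z})$, and obtaining such a presentation is essentially the content of the theorem, not an available input.

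A smaller but genuine point: the reduction $PI_n=\ker\bigl(\psi|_{E\Pi A_n}\bigr)$ needs more than the observation that nontrivial elements of $ES_n$ act by nontrivial signed permutations. Writing an element of $IA_n\cap \Pi A_n$ as $f\lambda$ with $f\in E\Pi A_n$ and $\lambda\in ES_n$, one must exclude $\psi(f)\psi(\lambda)=I$ with $\lambda\neq 1$, i.e.\ show that no nontrivial signed permutation matrix lies in $H_n^2$. This does follow, for instance, from the fact that every matrix in $H_n^2$ reduces to the identity modulo $2$ and has diagonal entries $\equiv 1 \pmod 4$, but the argument as written does not supply it.
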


In \cite[p.5 ]{Fullarton}, Fullarton also observed that $PI_n = IA_n \cap \Pi A_n'$ for each $n \geq 3$. In this section, we prove the following theorem which provides a precise description of the palindromic Torelli group. 

\begin{theorem}\label{ptg}
$PI_n = IA_n \cap E \Pi A_n'$ for each $n \geq 3$.
\end{theorem}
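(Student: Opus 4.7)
The inclusion $IA_n \cap E\Pi A_n' \subseteq PI_n$ is immediate, since $E\Pi A_n' \subseteq E\Pi A_n$ forces $IA_n \cap E\Pi A_n' \subseteq IA_n \cap E\Pi A_n = PI_n$. The content is in showing $PI_n \subseteq E\Pi A_n'$, i.e.\ that every $f \in E\Pi A_n$ which is the identity on the abelianisation of $F_n$ maps to zero in the abelianisation of $E\Pi A_n$.

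The first step is to compute $E\Pi A_n^{ab}$ from Collins's presentation. The first two families of defining relations are commutator relations and vanish in the abelianisation, while the third, $\mu_{ik}\mu_{jk}\mu_{ij} = \mu_{ij}\mu_{jk}\mu_{ik}^{-1}$, collapses to $2\mu_{ik} = 0$. For $n \geq 3$, every pair $(i,k)$ with $i\neq k$ admits a third distinct index $j$, so we obtain $2\mu_{ij}=0$ for every off-diagonal pair, and consequently $E\Pi A_n^{ab} \cong (\Z/2)^{n(n-1)}$, freely generated (over $\Z/2$) by the classes of the $\mu_{ij}$.

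Next, I would exploit the level-$4$ congruence quotient of $\Gamma_2(\Z)$. Writing a typical element of $\Gamma_2(\Z)$ as $I+2A$ with $A\in M_n(\Z)$, the identity $(I+2A)(I+2A')=I+2(A+A')+4AA'$ shows that $\Gamma_2(\Z)/\Gamma_4(\Z)$ is abelian and that reduction $I+2A \mapsto A \pmod{2}$ gives an \emph{injective} homomorphism $\Gamma_2(\Z)/\Gamma_4(\Z) \hookrightarrow M_n(\Z/2)$. Composing with the natural map $\psi\colon E\Pi A_n \to \Gamma_2(\Z)$ from \eqnref{se1} yields a homomorphism $\bar\psi\colon E\Pi A_n \to M_n(\Z/2)$ into an abelian target, and therefore $\bar\psi$ factors through $E\Pi A_n^{ab}$. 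On generators, $\bar\psi$ sends $\mu_{ij}$ to the off-diagonal matrix unit $E_{ij}$, and since $\{E_{ij}:i\neq j\}$ are $\Z/2$-linearly independent in $M_n(\Z/2)$, the induced map $\overline{\bar\psi}\colon E\Pi A_n^{ab} \to M_n(\Z/2)$ is injective.

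The conclusion is now almost automatic: for $f\in PI_n \subseteq IA_n$, by definition $\psi(f)=I$ in $\mathrm{GL}(n,\Z)$, so $\bar\psi(f)=0$ in $M_n(\Z/2)$; injectivity of $\overline{\bar\psi}$ forces the class of $f$ in $E\Pi A_n^{ab}$ to be trivial, giving $f\in E\Pi A_n'$. The main subtlety I expect is Step~1: one must make sure the two commutator families of Collins's relations do not impose extra torsion or linear dependencies on the generators in the abelianisation, so that $E\Pi A_n^{ab}$ really is as large as $(\Z/2)^{n(n-1)}$. Once that structural computation is in hand, the injectivity of the composite into $M_n(\Z/2)$ (which requires $n\geq 3$, matching the hypothesis) makes the argument go through immediately, and in particular sharpens Fullarton's $PI_n = IA_n \cap \Pi A_n'$ to the stated equality with $\Pi A_n'$ replaced by the potentially smaller $E\Pi A_n'$.
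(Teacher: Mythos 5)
Your proof is correct, and it takes a genuinely different and substantially shorter route than the paper's. The paper proves the nontrivial inclusion by invoking Fullarton's Theorem A --- that $PI_n$ is normally generated in $\Pi A_n$ by $[\mu_{12},\mu_{13}]$ and $(\mu_{23}\mu_{13}^{-1}\mu_{31}\mu_{32}\mu_{12}\mu_{21}^{-1})^2$ --- then verifying, via a Reidemeister--Schreier generating set for $E \Pi A_3'$, that both normal generators lie in $E \Pi A_3'$, and finally using that $E \Pi A_n'$ is characteristic in $E \Pi A_n$ and hence normal in $\Pi A_n$. You bypass Fullarton entirely: Collins's long relations abelianise to $2\mu_{ik}=0$, and the homomorphism $\Gamma_2(\Z)\to M_n(\Z/2)$, $I+2A\mapsto A \bmod 2$, composed with $\psi$ gives a map to an abelian group sending $\mu_{ij}\mapsto E_{ij}$; since the $E_{ij}$ are linearly independent, the surjection $(\Z/2)^{n(n-1)}\twoheadrightarrow E\Pi A_n^{ab}$ followed by this map is injective, which simultaneously settles the ``subtlety'' you flag (no hidden relations can collapse the abelianisation below $(\Z/2)^{n(n-1)}$) and shows that $\psi(f)=I$ forces $f\in E\Pi A_n'$. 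Your argument is elementary, uniform in $n\geq 3$, independent of Fullarton's generating-set theorem, and in fact proves more: any $f\in E\Pi A_n$ with $\psi(f)\equiv I \pmod 4$ lies in $E\Pi A_n'$. What the paper's longer route buys is the explicit Schreier generating set of $E \Pi A_3'$ and the classification of which of those generators are $IA$-automorphisms, which the authors present as being of independent interest but which the equality itself does not require. One small point: you implicitly use the introduction's definition $PI_n=IA_n\cap E\Pi A_n$; if one instead starts from $PI_n=IA_n\cap \Pi A_n$ as in Section~\ref{sec6}, you need the easy extra observation that this intersection already lies in $E\Pi A_n$ --- which follows from your own mod-$4$ homomorphism (elements of $H_n^2$ have diagonal entries $\equiv 1 \pmod 4$, so no nontrivial signed permutation matrix in $\psi(ES_n)$ can cancel $\psi(f)$) together with the faithfulness of $ES_n$ in ${\rm GL}(n,\Z)$.
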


To prove this theorem, we first find generators of $E \Pi A_3'$, and then identify those generators which lie in $IA_3$.

Recall that $E \Pi A_3= \langle A~||~R\rangle$, where
$$
A= \{\mu_{12},\mu_{21},\mu_{13},\mu_{23},\mu_{31},\mu_{32} \}
$$
and $R$ contains the following six long relations

$$ \mu_{12} \mu_{32} \mu_{13} = \mu_{13} \mu_{32} \mu_{12}^{-1},~~\mu_{21} \mu_{31} \mu_{23} = \mu_{23} \mu_{31} \mu_{21}^{-1},$$

$$ \mu_{13} \mu_{23} \mu_{12} = \mu_{12} \mu_{23} \mu_{13}^{-1},~~\mu_{23} \mu_{13} \mu_{21} = \mu_{21} \mu_{13} \mu_{23}^{-1},$$

$$\mu_{31} \mu_{21} \mu_{32} = \mu_{32} \mu_{21} \mu_{31}^{-1},~~\mu_{32} \mu_{12} \mu_{31} = \mu_{31} \mu_{12} \mu_{32}^{-1}, $$
and the following three relations of commutativity
$$
[\mu_{31},  \mu_{21}] = [\mu_{32}, \mu_{12}] = [\mu_{23},  \mu_{13}] = 1.
$$

Also, recall that $ES_3=\langle t_1, t_2, t_3, \alpha_{12}, \alpha_{23} \rangle$, and $ES_3$ action on $\Pi A_3$ yield the following
$$t_1 \mu_{12} t_1=\mu_{12}^{-1}, ~ t_1 \mu_{13} t_1=\mu_{13}^{-1}, ~ t_1 \mu_{21} t_1=\mu_{21}^{-1},$$
$$t_1 \mu_{23} t_1=\mu_{23}, ~ t_1 \mu_{32} t_1=\mu_{32}, ~ t_1 \mu_{31} t_1=\mu_{31}^{-1},$$
$$ \alpha_{12} \mu_{12} \alpha_{12}=\mu_{21}, ~ \alpha_{12} \mu_{21} \alpha_{12} =\mu_{12}, ~ \alpha_{12} \mu_{13} \alpha_{12}=\mu_{23}.$$

For $\mu_{ij} \in E \Pi A_3$, let $\widetilde{\mu_{ij}}$ denote its image in $E \Pi A_3/E \Pi A_3'$. It is easy to see that
\begin{equation*}
\begin{split}
E \Pi A_3/E \Pi A_3'= ~& \langle \widetilde{\mu_{ij}}~||~[\widetilde{\mu_{ij}},\widetilde{\mu_{kl}}]=1, \widetilde{\mu_{ij}}^2=1\rangle\\
\cong ~& \underbrace{\mathbb{Z}/2 \oplus  \cdots \oplus \mathbb{Z}/2}_{6~copies}.
\end{split}
\end{equation*}
Hence $E \Pi A_3'$ is a finitely presented group. Let $$\Lambda_3=\{\mu_{12}^{\epsilon_1}\mu_{21}^{\epsilon_2}\mu_{13}^{\epsilon_3}\mu_{23}^{\epsilon_4}\mu_{31}^{\epsilon_5}\mu_{32}^{\epsilon_6}~|~ \epsilon_i=0,1 \}$$ be a Schreier set of coset representatives of $E \Pi A_3'$ in $E \Pi A_3$. Consider the map $$E \Pi A_3 \to \Lambda_3$$ mapping $w \in E \Pi A_3$ to its representative $\widetilde{w} \in \Lambda_3$. Then by \cite[Theorem 2.7]{Magnus}, $E \Pi A_3'$ is generated by the set $$\{S_{\lambda, a}= (\lambda a)(\widetilde{ \lambda a})^{-1}~|~ \lambda \in \Lambda_3~\textrm{and}~ a \in A \}.$$

For $\lambda= \mu_{12}^{\epsilon_1}\mu_{21}^{\epsilon_2}\mu_{13}^{\epsilon_3}\mu_{23}^{\epsilon_4}\mu_{31}^{\epsilon_5}\mu_{32}^{\epsilon_6} \in \Lambda_3$, we set $$|\lambda|=\epsilon_1+ \cdots+\epsilon_6.$$ Further, we define an order on the generators of $E \Pi A_3$ by the rule $$\mu_{12}<\mu_{21}<\mu_{13}<\mu_{23}<\mu_{31}<\mu_{32}.$$
We use this ordering and induction on $|\lambda|$ to find the set of generators of $E \Pi A_3'$. Clearly, $S_{\lambda, a}=1$ for $|\lambda|=0$.

\begin{proposition}
For $1\leq |\lambda| \leq 6$, the generators of $E \Pi A_3'$ are given as follows:
\begin{enumerate}

\item If $|\lambda|=1$, then
\begin{displaymath}
S_{\mu_{ij}, \mu_{kl}} = \left\{ \begin{array}{ll}
1 & \textrm{if $\mu_{ij}< \mu_{kl}$}\\
\mu_{ij}^2 & \textrm{if $\mu_{ij}=\mu_{kl}$}\\
\null [ \mu_{ij}^{-1}, \mu_{kl}^{-1} ] & \textrm{if $\mu_{kl}<\mu_{ij}.$}
\end{array} \right.
\end{displaymath}

\item If $|\lambda|=2$, then
\begin{displaymath}
S_{\mu_{i_1j_1}\mu_{i_2j_2}, \mu_{kl}} = \left\{ \begin{array}{ll}
1 & \textrm{if $\mu_{i_2j_2}< \mu_{kl}$}\\
(\mu_{kl}^2)^{\mu_{i_1j_1}^{-1}} & \textrm{if $\mu_{i_2j_2}=\mu_{kl}$}\\
\null [\mu_{i_2j_2}^{-1},\mu_{kl}^{-1}]^{\mu_{i_1j_1}^{-1}} & \textrm{if $\mu_{i_1j_1} < \mu_{kl}<\mu_{i_2j_2}$}\\
\mu_{i_1j_1}\mu_{i_1j_1}^{\mu_{i_2 j_2}^{-1}} & \textrm{if $\mu_{i_1j_1}=\mu_{kl}$}\\
\null [\mu_{i_2j_2}^{-1}\mu_{i_1j_1}^{-1}, \mu_{kl}^{-1}] & \textrm{if $\mu_{kl}<\mu_{i_1j_1}$.}\\
\end{array} \right.
\end{displaymath}

\item If $|\lambda|=3$, then
\begin{displaymath}
S_{\mu_{i_1j_1}\mu_{i_2j_2}\mu_{i_3j_3}, \mu_{kl}} = \left\{ \begin{array}{ll}
1 & \textrm{if $\mu_{i_3j_3}< \mu_{kl}$}\\
(\mu^2_{i_3j_3})^{\mu_{i_2j_2}^{-1}\mu_{i_1j_1}^{-1}} & \textrm{if $\mu_{i_3j_3}=\mu_{kl}$}\\ \\
\null [\mu_{i_3j_3}^{-1},\mu_{kl}^{-1}]^{\mu_{i_2j_2}^{-1} \mu_{i_1j_1}^{-1}} & \textrm{if $\mu_{i_2j_2} < \mu_{kl} < \mu_{i_3j_3}$}\\ \\
(\mu_{i_2j_2} \mu_{i_2j_2}^{\mu_{i_3j_3}^{-1}})^{\mu_{i_1j_1}^{-1}} & \textrm{if $\mu_{i_2j_2}=\mu_{kl}$}\\ \\
\null [ \mu_{i_3j_3}^{-1} \mu_{i_2j_2}^{-1}, \mu_{kl}^{-1} ]^{\mu_{i_1j_1}^{-1}} & \textrm{if $\mu_{i_1j_1} < \mu_{kl} < \mu_{i_2j_2}$}\\ \\
\mu_{i_1j_1} \mu_{i_1j_1}^{\mu_{i_3j_3}^{-1} \mu_{i_2j_2}^{-1}} & \textrm{if $\mu_{i_1j_1}=\mu_{kl}$}\\ \\
\null [\mu_{i_3j_3}^{-1}\mu_{i_2j_2}^{-1} \mu_{i_1j_1}^{-1}, \mu_{kl}^{-1}] & \textrm{if $\mu_{kl}<\mu_{i_1j_1}$.}\\
\end{array} \right.
\end{displaymath}
\item If $|\lambda|=4$, then
\begin{displaymath}
S_{\mu_{i_1j_1}\mu_{i_2j_2}\mu_{i_3j_3}\mu_{i_4j_4}, \mu_{kl}} = \left\{ \begin{array}{ll}
1 & \textrm{if $\mu_{i_4j_4}< \mu_{kl}$}\\
(\mu^2_{i_4j_4})^{\mu_{i_3j_3}^{-1}\mu_{i_2j_2}^{-1}\mu_{i_1j_1}^{-1}} & \textrm{if $\mu_{i_4j_4}=\mu_{kl}$}\\ \\
\null [\mu_{i_4j_4}^{-1},\mu_{kl}^{-1}]^{\mu_{i_3j_3}^{-1} \mu_{i_2j_2}^{-1} \mu_{i_1j_1}^{-1}} & \textrm{if $\mu_{i_3j_3} < \mu_{kl} < \mu_{i_4j_4}$}\\ \\
(\mu_{i_3j_3}\mu_{i_3j_3}^{\mu_{i_4j_4}^{-1}})^{\mu_{i_2j_2}^{-1}\mu_{i_1j_1}^{-1}} & \textrm{if $\mu_{i_3j_3}=\mu_{kl}$}\\ \\
 \null [\mu_{i_4j_4}^{-1}\mu_{i_3j_3}^{-1},\mu_{kl}^{-1}]^{\mu_{i_2j_2}^{-1} \mu_{i_1j_1}^{-1}} & \textrm{if $\mu_{i_2j_2} < \mu_{kl} < \mu_{i_3j_3}$}\\ \\
(\mu_{i_2j_2} \mu_{i_2j_2}^{\mu_{i_4j_4}^{-1}\mu_{i_3j_3}^{-1}})^{\mu_{i_1j_1}^{-1}} & \textrm{if $\mu_{i_2j_2}=\mu_{kl}$}\\ \\
\null [\mu_{i_4j_4}^{-1} \mu_{i_3j_3}^{-1} \mu_{i_2j_2}^{-1}, \mu_{kl}^{-1} ]^{\mu_{i_1j_1}^{-1}} & \textrm{if $\mu_{i_1j_1} < \mu_{kl} < \mu_{i_2j_2}$}\\ \\
\mu_{i_1j_1} \mu_{i_1j_1}^{\mu_{i_4j_4}^{-1}\mu_{i_3j_3}^{-1} \mu_{i_2j_2}^{-1}} & \textrm{if $\mu_{i_1j_1}=\mu_{kl}$}\\ \\
 \null [\mu_{i_4j_4}^{-1}\mu_{i_3j_3}^{-1}\mu_{i_2j_2}^{-1} \mu_{i_1j_1}^{-1}, \mu_{kl}^{-1}] & \textrm{if $\mu_{kl}<\mu_{i_1j_1}$.}\\
\end{array} \right.
\end{displaymath}

\item If $|\lambda|=5$, then
\begin{displaymath}
S_{\mu_{i_1j_1}\mu_{i_2j_2}\mu_{i_3j_3}\mu_{i_4j_4}\mu_{i_5j_5}, \mu_{kl}} = \left\{ \begin{array}{ll}
1 & \textrm{if $\mu_{i_5j_5}< \mu_{kl}$}\\
(\mu^2_{i_5j_5})^{\mu_{i_4j_4}^{-1}\mu_{i_3j_3}^{-1}\mu_{i_2j_2}^{-1}\mu_{i_1j_1}^{-1}} & \textrm{if $\mu_{i_5j_5}=\mu_{kl}$}\\ \\
\null [\mu_{i_5j_5}^{-1},\mu_{kl}^{-1}]^{\mu_{i_4j_4}^{-1}\mu_{i_3j_3}^{-1} \mu_{i_2j_2}^{-1} \mu_{i_1j_1}^{-1}} &
\textrm{if $\mu_{i_4j_4} < \mu_{kl} < \mu_{i_5j_5}$}\\ \\
(\mu_{i_4j_4} \mu_{i_4j_4}^{\mu_{i_5j_5}^{-1}})^{\mu_{i_3j_3}^{-1}\mu_{i_2j_2}^{-1}\mu_{i_1j_1}^{-1}} & \textrm{if $\mu_{i_4j_4}=\mu_{kl}$}\\ \\
\null [\mu_{i_5j_5}^{-1} \mu_{i_4j_4}^{-1},\mu_{kl}^{-1}]^{ \mu_{i_3j_3}^{-1} \mu_{i_2j_2}^{-1} \mu_{i_1j_1}^{-1}} & \textrm{if $\mu_{i_3j_3} < \mu_{kl} < \mu_{i_4j_4}$}\\ \\
(\mu_{i_3j_3} \mu_{i_3j_3}^{ \mu_{i_5j_5}^{-1} \mu_{i_4j_4}^{-1}})^{\mu_{i_2j_2}^{-1}\mu_{i_1j_1}^{-1}} & \textrm{if $\mu_{i_3j_3}=\mu_{kl}$}\\ \\
\null [\mu_{i_5j_5}^{-1} \mu_{i_4j_4}^{-1}\mu_{i_3j_3}^{-1},\mu_{kl}^{-1}]^{\mu_{i_2j_2}^{-1} \mu_{i_1j_1}^{-1}} & \textrm{if $\mu_{i_2j_2} < \mu_{kl} < \mu_{i_3j_3}$}\\ \\
(\mu_{i_2j_2} \mu_{i_2j_2}^{\mu_{i_5j_5}^{-1} \mu_{i_4j_4}^{-1}\mu_{i_3j_3}^{-1}})^{\mu_{i_1j_1}^{-1}} & \textrm{if $\mu_{i_2j_2}=\mu_{kl}$}\\ \\
\null [\mu_{i_5j_5}^{-1} \mu_{i_4j_4}^{-1} \mu_{i_3j_3}^{-1} \mu_{i_2j_2}^{-1}, \mu_{kl}^{-1} ]^{\mu_{i_1j_1}^{-1}} & \textrm{if $\mu_{i_1j_1} < \mu_{kl} < \mu_{i_2j_2}$}\\ \\
\mu_{i_1j_1} \mu_{i_1j_1}^{\mu_{i_5j_5}^{-1} \mu_{i_4j_4}^{-1}\mu_{i_3j_3}^{-1} \mu_{i_2j_2}^{-1}} & \textrm{if $\mu_{i_1j_1}=\mu_{kl}$}\\ \\
\null [\mu_{i_5j_5}^{-1} \mu_{i_4j_4}^{-1}\mu_{i_3j_3}^{-1}\mu_{i_2j_2}^{-1} \mu_{i_1j_1}^{-1}, \mu_{kl}^{-1}] & \textrm{if $\mu_{kl}<\mu_{i_1j_1}$.}\\
\end{array} \right.
\end{displaymath}

\item If $|\lambda|=6$, then
\begin{displaymath}
S_{\mu_{12}\mu_{21}\mu_{13}\mu_{23}\mu_{31}\mu_{32}, \mu_{kl}} = \left\{ \begin{array}{ll}
(\mu^2_{32})^{\mu_{31}^{-1}\mu_{23}^{-1}\mu_{13}^{-1}\mu_{21}^{-1}\mu_{12}^{-1}} & \textrm{if $\mu_{32}=\mu_{kl}$}\\ \\
(\mu_{31} \mu_{31}^{\mu_{32}^{-1}})^{\mu_{23}^{-1}\mu_{13}^{-1}\mu_{21}^{-1}\mu_{12}^{-1}} & \textrm{if $\mu_{31}=\mu_{kl}$}\\ \\
(\mu_{23} \mu_{23}^{\mu_{32}^{-1} \mu_{31}^{-1}})^{\mu_{13}^{-1}\mu_{21}^{-1}\mu_{12}^{-1}} & \textrm{if $\mu_{23}=\mu_{kl}$}\\ \\
(\mu_{13} \mu_{13}^{\mu_{32}^{-1} \mu_{31}^{-1} \mu_{23}^{-1}})^{\mu_{21}^{-1}\mu_{12}^{-1}} & \textrm{if $\mu_{13}=\mu_{kl}$}\\ \\
(\mu_{21} \mu_{21}^{\mu_{32}^{-1} \mu_{31}^{-1} \mu_{23}^{-1} \mu_{13}^{-1}})^{\mu_{12}^{-1}} & \textrm{if $\mu_{21}=\mu_{kl}$}\\ \\
\mu_{12} \mu_{12}^{\mu_{32}^{-1} \mu_{31}^{-1} \mu_{23}^{-1} \mu_{13}^{-1}\mu_{21}^{-1}} & \textrm{if $\mu_{21}=\mu_{kl}$}.\\ \\
\end{array} \right.
\end{displaymath}

\end{enumerate}
\end{proposition}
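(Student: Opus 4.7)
The plan is to apply the Reidemeister--Schreier rewriting process directly, as set up in the paragraph preceding the proposition. Since $\Lambda_3$ is a Schreier transversal for $E\Pi A_3'$ in $E\Pi A_3$, the task reduces to computing $\widetilde{\lambda a}$ for every pair $(\lambda, a) \in \Lambda_3 \times A$ and then simplifying the corresponding Schreier generator $S_{\lambda, a} = (\lambda a)(\widetilde{\lambda a})^{-1}$.

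First I would exploit the fact that $E\Pi A_3/E\Pi A_3' \cong (\mathbb{Z}/2)^6$: the representative $\widetilde{\lambda a}$ is completely determined by the parities of the six exponents in the image of $\lambda a$ in the abelianization. Writing $\lambda = \mu_{i_1 j_1} \cdots \mu_{i_k j_k}$ with the factors strictly increasing in the chosen order and $a = \mu_{kl}$, three mutually exclusive regimes arise. If $a > \mu_{i_k j_k}$, then $\lambda a$ already lies in $\Lambda_3$, and hence $S_{\lambda, a} = 1$. If $a = \mu_{i_r j_r}$ for some $r$, then $\widetilde{\lambda a}$ is obtained by deleting the factor $\mu_{i_r j_r}$ from $\lambda$. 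Otherwise $a$ falls strictly between two consecutive factors $\mu_{i_r j_r} < a < \mu_{i_{r+1} j_{r+1}}$ (or strictly before the first factor), and $\widetilde{\lambda a}$ is obtained by inserting $a$ into $\lambda$ at that position.

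The algebraic core of the proof is then a pair of elementary identities, each verified by writing out both sides and cancelling. In the insertion regime, direct expansion yields
\begin{equation*}
(\mu_{i_1 j_1} \cdots \mu_{i_k j_k}\, a)(\mu_{i_1 j_1} \cdots \mu_{i_r j_r}\, a\, \mu_{i_{r+1} j_{r+1}} \cdots \mu_{i_k j_k})^{-1} = [\mu_{i_k j_k}^{-1} \cdots \mu_{i_{r+1} j_{r+1}}^{-1},\, a^{-1}]^{\mu_{i_r j_r}^{-1} \cdots \mu_{i_1 j_1}^{-1}},
\end{equation*}
which accounts for every commutator formula appearing in the statement, with the boundary case $r = 0$ giving the lines labelled ``if $\mu_{kl} < \mu_{i_1 j_1}$''. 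In the deletion regime with $a = \mu_{i_r j_r}$, the analogous expansion gives $\mu_{i_r j_r}\, \mu_{i_r j_r}^{\mu_{i_k j_k}^{-1} \cdots \mu_{i_{r+1} j_{r+1}}^{-1}}$ conjugated on the outside by the prefix $\mu_{i_{r-1} j_{r-1}}^{-1} \cdots \mu_{i_1 j_1}^{-1}$; when $r = k$ the inner tail is empty and this collapses to the pure square $(\mu_{i_k j_k}^2)^{\mu_{i_{k-1} j_{k-1}}^{-1} \cdots \mu_{i_1 j_1}^{-1}}$.

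With these two master identities in hand, each of the six items of the proposition is nothing more than an exhaustive listing of the possible positions of $a$ relative to the ordered factors of $\lambda$, as $|\lambda|$ runs from $1$ to $6$. The formulas for $|\lambda| = 1$ serve as the base case, and the blocks for larger $|\lambda|$ follow the same pattern with one additional prefix generator introduced at each step. The hard part will not be mathematical, since the argument is uniform, but purely organisational: with up to seven subcases for each value of $|\lambda|$ and conjugating prefixes that must be read right-to-left, care is needed to ensure that the prefix $\mu_{i_r j_r}^{-1} \cdots \mu_{i_1 j_1}^{-1}$ is recorded in the correct direction in every one of the roughly thirty-six subcases, and in particular that item~(6), where $\lambda$ exhausts $A$ and only the deletion regime can occur, is checked separately with the insertion branch absent.
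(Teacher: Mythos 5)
Your proposal is correct and follows exactly the route the paper intends: the paper's own proof is simply the remark that everything ``follows from the definition of $S_{\lambda,a}$'' via the Reidemeister--Schreier rewriting over the transversal $\Lambda_3$, and your two master identities for the insertion and deletion regimes (with the boundary cases $r=0$ and $r=k$) are precisely the omitted computation, correctly stated under the paper's conventions $g^h=h^{-1}gh$ and $[g,h]=g^{-1}h^{-1}gh$. Nothing further is needed.
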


\begin{proof}
The proof follows from the definition of $S_{\lambda, a}$ and are omitted.
\end{proof}

Thus, we have found  a generating set for the commutator subgroup $E \Pi A_3'$. Next, we identify generators of $E \Pi A_3'$ which lie in $IA_3$. For this, we introduce some functions as follows.

If $u \in F_n$ and $1 \leq j \leq n$, then we define $$l_j(u)= \textrm{sum of powers with which the generator}~ x_j~\textrm{appears in}~ u.$$ For example, if $u = x_3^2 x_1^{-2} x_3^5 x_1^2$, then
$$
l_1(u) = 0,~~l_2(u) = 0,~~l_3(u) = 7.
$$
We can see that for each $u, v \in F_n$ and integer $k$ the following formulas hold
$$
l_j(u v) = l_j(u) + l_j(v),~~~l_j(u^k) = k l_j(u).
$$
For $\varphi \in Aut(F_n)$ and $1 \leq i, j \leq n$, we define
$$
L_i^j (\varphi) = l_j \big((x_i)\varphi\big).
$$

It is not difficult to prove the following result.

\begin{lemma}
\begin{enumerate}
\item[]
\item If $\varphi \in IA_n$, then $L_i^i (\varphi) = 1$ and $L_i^j (\varphi) = 0$ for $i\not=j$.
\item If $\varphi \in E \Pi A_n$, then $L_i^i (\varphi) \equiv 1~ (mod~ 2)$ and $L_i^j (\varphi) \equiv 0~ (mod~ 2)$ for $i\not=j$.
\end{enumerate}
\end{lemma}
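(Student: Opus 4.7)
The plan is to recognise $L_i^j(\varphi)$ as the $(i,j)$-entry of the matrix of $\varphi$ acting on the abelianisation $F_n^{\mathrm{ab}} \cong \Z^n$, after which both assertions reduce to facts that have already been established in the paper.

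First I would observe that the abelianisation map $F_n \to \Z^n$ sends a word $w$ to the tuple $\bigl(l_1(w), l_2(w), \ldots, l_n(w)\bigr)$, so that for any $\varphi \in \mathrm{Aut}(F_n)$ the induced automorphism $\bar\varphi = \psi(\varphi) \in \mathrm{GL}(n,\Z)$ has matrix $\bigl(L_i^j(\varphi)\bigr)_{i,j}$ relative to the standard basis. This identification is the entire content of the bridge between word-length invariants and matrix entries.

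For part (1), the definition of $IA_n$ is precisely that $\bar\varphi = \mathrm{id}_{\Z^n}$, i.e.\ the matrix $\bigl(L_i^j(\varphi)\bigr)$ equals the identity matrix; reading off entries yields $L_i^i(\varphi)=1$ and $L_i^j(\varphi)=0$ for $i \neq j$. For part (2), I invoke the computation recalled in Section~\ref{sec4}, namely that $\psi(\mu_{ij}) = t_{ij}(2) = I + 2 E_{ij}$ for every generator $\mu_{ij}$ of $E\Pi A_n$. Since $t_{ij}(2) \equiv I \pmod{2}$, and the reduction $\mathrm{GL}(n,\Z) \to \mathrm{GL}(n,\Z/2)$ is a homomorphism, any $\varphi \in E \Pi A_n$ satisfies $\psi(\varphi) \equiv I \pmod 2$. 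Translating back through the identification above gives the congruences $L_i^i(\varphi) \equiv 1 \pmod 2$ and $L_i^j(\varphi) \equiv 0 \pmod 2$ for $i \neq j$.

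There is no real obstacle here; the only thing to be careful about is verifying the additivity $l_j(uv) = l_j(u)+l_j(v)$, which is immediate because $l_j$ is literally the $j$-th coordinate of the abelianisation homomorphism. Once that is stated, both items follow in one line each from the definitions of $IA_n$ and $E\Pi A_n$ together with the formula $\psi(\mu_{ij}) = t_{ij}(2)$ established earlier in the paper.
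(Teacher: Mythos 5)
Your proof is correct, but it takes a different route from the paper's. You identify $L_i^j(\varphi)$ with the $(i,j)$-entry of the matrix $\psi(\varphi)$ of the induced map on the abelianisation, so that (1) becomes the statement that $IA_n = \ker\psi$ and (2) follows from $\psi(\mu_{ij}) = t_{ij}(2) \equiv I \pmod 2$ together with the fact that reduction mod $2$ is a homomorphism, applied generator by generator. The paper instead argues entirely at the level of words: for (1) it uses the characterisation $(x_i)\varphi = x_i u_i$ with $u_i \in F_n'$ and the vanishing of $l_j$ on commutators, and for (2) it uses that every $\varphi \in E\Pi A_n$ satisfies $(x_i)\varphi = u_i x_i \overline{u_i}$, so that $l_j\bigl((x_i)\varphi\bigr) = l_j(x_i) + 2\,l_j(u_i)$. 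The two arguments are essentially equivalent in content, but yours has a small advantage for part (2): the paper's appeal to the palindromic normal form $u_i x_i \overline{u_i}$ ``by definition'' is really a nontrivial closure property of $E\Pi A_n$ (its definition is as the subgroup generated by the $\mu_{ij}$), whereas your argument needs only the images of the generators under $\psi$ and the homomorphism property, so nothing beyond what Section~\ref{sec4} already records. The paper's word-level computation, on the other hand, yields the slightly finer statement $L_i^j(\varphi) = 2\,l_j(u_i)$ rather than just the congruence.
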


\begin{proof}
(1) First note that, if $u \in F_n'$, then $l_i(u) = 0$ for each $i$. By definition, $\varphi$ is an $IA$-automorphism of $F_n$ if and only if $(x_i)\varphi = x_i u_i$ for some $u_i \in F_n'$ and for each generator $x_i$. Therefore, if $\varphi \in IA_n$, then
$$L_i^i(\varphi) = l_i\big((x_i)\varphi\big) = l_i(x_i u_i) = l_i(x_i) + l_i(u_i) = 1$$
and  $$ L_i^j(\varphi) = l_j\big((x_i)\varphi\big) = l_j(x_i u_i) = l_j(x_i) + l_j(u_i) = 0$$ for $i\not= j$. 

(2) By definition, $\varphi \in E \Pi A_n$  if and only if $(x_i)\varphi = u_i x_i \overline{u_i}$ for some $u_i \in F_n$ and for each generator $x_i$. Here $\overline{u_i}$ is the reverse of $u_i$. Therefore, if $\varphi \in E \Pi A_n$, then
$$ L_i^i(\varphi) = l_i\big((x_i)\varphi\big) = l_i(u_i x_i \overline{u_i}) = l_i(x_i) + 2 l_i(u_i) \equiv 1~(mod~ 2)$$
and
$$ L_i^j(\varphi) = l_j\big((x_i)\varphi\big) = l_j(u_i x_i \overline{u_i}) =  2 l_j(u_i) \equiv 0~(mod~ 2)$$ for $i\not= j$. This proves the lemma.
\end{proof}

The following observation will be used quite often.

\begin{lemma}\label{lemconj}
Let $\phi \in E \Pi A_3'$. Then $\phi \in IA_3$ if and only if $\beta^{-1} \phi \beta \in IA_3$ for all $\beta \in  \Pi A_3$.
\end{lemma}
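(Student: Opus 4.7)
The plan is to observe that the statement is essentially a restatement of the normality of $IA_3$ inside $Aut(F_3)$, which comes directly from the short exact sequence \eqnref{se1} cited earlier in the paper. Recall from that sequence that $IA_n$ is defined as the kernel of the natural homomorphism $\psi : Aut(F_n) \to \mathrm{GL}(n,\mathbb{Z})$ induced by the action on the abelianisation. Being a kernel, $IA_n$ is a normal subgroup of $Aut(F_n)$.

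The reverse implication is trivial: one simply specializes to $\beta = 1$. For the forward implication, suppose $\phi \in E\Pi A_3' \cap IA_3$ and let $\beta \in \Pi A_3$ be arbitrary. Since $\Pi A_3 \leq Aut(F_3)$ and $IA_3 \trianglelefteq Aut(F_3)$, conjugation by $\beta$ preserves $IA_3$, so $\beta^{-1}\phi\beta \in IA_3$ as required. The hypothesis $\phi \in E\Pi A_3'$ is not needed for the equivalence itself; it merely records the context in which the lemma will be applied in the sequel (to verify membership of commutators in $IA_3$ up to conjugation).

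There is essentially no main obstacle here. The statement is an immediate normality observation, and I would present it in a couple of lines: quote the exact sequence \eqnref{se1} to conclude that $IA_3$ is normal in $Aut(F_3)$, then deduce both directions. The substance of the section lies not in this lemma but in the subsequent use of it, where one will conjugate generators of $E\Pi A_3'$ by elements of $\Pi A_3$ to reduce the verification of $PI_n = IA_n \cap E\Pi A_n'$ to checking a manageable family of representatives.
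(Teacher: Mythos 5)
Your proof is correct and is essentially the paper's own argument: the paper also dispatches this lemma in one line by citing the normality of $IA_3$ in $Aut(F_3)$, and your observation that the hypothesis $\phi \in E\Pi A_3'$ is not actually needed is accurate.
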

\begin{proof}
The proof follows from the fact that $IA_3$ is a normal subgroup of $Aut(F_3)$.
\end{proof}

Recall that, $t_{ij}(2)$ is the matrix of the automorphism induced by $\mu_{ij}$ on the abelianisation of $F_3$ with respect to the standard
basis of $\mathbb{Z}^3$. For an automorphism $\phi \in E \Pi A_3'$, let $t_{\phi}$ denote the product of matrices corresponding to the generators  appearing in $\phi$. Further, let 
$$L_1(\phi)=\begin{pmatrix} L_1^1(\phi)~~~ L_1^2(\phi) ~~ L_1^3(\phi)\end{pmatrix},$$
$$L_2(\phi)=\begin{pmatrix} L_2^1(\phi) ~~L_2^2(\phi) ~~ L_2^3(\phi)\end{pmatrix},$$
$$L_3(\phi)=\begin{pmatrix} L_3^1(\phi) ~~L_3^2(\phi) ~~ L_3^3(\phi)\end{pmatrix}.$$
Then it is easy to see that 
$$L_1(\phi) = \begin{pmatrix} 1 ~~ 0~~0  \end{pmatrix} t_{\phi},$$
$$L_2(\phi) = \begin{pmatrix} 0 ~~1~~0  \end{pmatrix}t_{\phi},$$
$$L_3(\phi) = \begin{pmatrix} 0~~0~~1  \end{pmatrix}t_{\phi}.$$

We use this observation and the above lemmas to identify generators of $E \Pi A_3'$ which lie in $IA_3$. First of all, note that, conjugates of all the generators of $E \Pi A_3'$ corresponding to the cases $|\lambda| =1,2,3,4, 6$ (except $\mu_{12} \mu_{12}^{\mu_{32}^{-1} \mu_{31}^{-1} \mu_{23}^{-1} \mu_{13}^{-1}\mu_{21}^{-1}}$ corresponding to  $|\lambda| =6$)
appear as generators corresponding to the case $|\lambda| =5$. Further, direct computations yield that $L_2^1\big(\mu_{12} \mu_{12}^{\mu_{32}^{-1} \mu_{31}^{-1} \mu_{23}^{-1} \mu_{13}^{-1}\mu_{21}^{-1}}) \neq 0$ and hence this generator does not lie in $IA_3$. This together with Lemma \ref{lemconj} implies that it is enough to find generators corresponding to the case  $|\lambda| =5$ which lie in $IA_3$.

Next, we prove the following lemma, which is of independent interest, and will be used in the proof of \thmref{ptg}.

\begin{proposition}
A generator $S_{\lambda, a}$ of $E \Pi A_3'$ lies in $IA_3$ if and only if it is one of the following types:
\begin{enumerate}
\item $\null [\mu_{13}^{-1}, \mu_{12}^{-1}]$.
\item $\null [\mu_{23}^{-1}, \mu_{21}^{-1}]^{a^{-1}}$, where $a \in \{1, \mu_{12}\}$.
\item $\null [\mu_{32}^{-1}, \mu_{31}^{-1}]^{b^{-1}}$, where $b \in \{\mu_{12}^{\epsilon_1} \mu_{21}^{\epsilon_2} \mu_{13}^{\epsilon_3}\mu_{23}^{\epsilon_4}~|~ \epsilon_i=0,1\}$.
\end{enumerate}
\end{proposition}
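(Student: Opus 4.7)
The plan is to apply the $L_i^j$ criterion: $\phi \in E \Pi A_3'$ lies in $IA_3$ precisely when its abelianisation $t_\phi \in \mathrm{GL}(3,\mathbb{Z})$ equals the identity, equivalently when every $L_i^j(\phi)$ takes the value prescribed by the preceding lemma. Since each $\mu_{ij}$ abelianises to $t_{ij}(2)$, this reduces each check to a product of elementary matrices. By the reduction already given in the text (via \lemref{lemconj} together with the observation that conjugates of the generators at $|\lambda| \in \{1,2,3,4,6\}$ appear among the $|\lambda|=5$ generators, save the one $|\lambda|=6$ exception already handled), it suffices to determine which $S_{\lambda,a}$ with $|\lambda|=5$ lie in $IA_3$.

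I will then walk through the subcases in the $|\lambda|=5$ formula. The ``squared'' forms $(\mu_{ij}^2)^{\mathrm{conj}}$ abelianise to conjugates of $t_{ij}(4)$, which is never trivial. The ``self-conjugate-product'' forms $(\mu_{ij}\mu_{ij}^{w^{-1}})^{\mathrm{conj}}$ abelianise to conjugates of $t_{ij}(2)\cdot w\,t_{ij}(2)\,w^{-1}$; direct computation of $L_i^j$, extending the sample calculation the paper already carried out for the exceptional $|\lambda|=6$ generator, rules each of these out. The ``longer commutator'' forms, in which the bracket contains a product of two or more distinct $\mu$'s, abelianise to conjugates of commutators of matrix products; a direct matrix check shows each such commutator in $\mathrm{GL}(3,\mathbb{Z})$ is nontrivial. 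The remaining ``single commutator'' forms $[\mu_{ij}^{-1},\mu_{kl}^{-1}]^{\mathrm{conj}}$ lie in $IA_3$ if and only if $t_{ij}(2)$ and $t_{kl}(2)$ commute in $\mathrm{GL}(3,\mathbb{Z})$. Inspection of the six transvections shows that the commuting pairs are exactly the three ``row pairs'' $(t_{12},t_{13})$, $(t_{21},t_{23})$, $(t_{31},t_{32})$ and the three ``column pairs''; but for column pairs the elements $\mu_{ij}$ and $\mu_{kj}$ already commute in $E\Pi A_3$ by the relation $[\mu_{ij},\mu_{kj}]=1$, so their commutator is trivial and is not a nontrivial Schreier generator. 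Hence only the three row-pair single commutators can contribute.

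It remains to enumerate, for each of the three row-pair commutators, the outer conjugators that actually arise among the Schreier generators $S_{\lambda,a}$ at all levels $|\lambda|$. Tracing through the formulae, in every single-commutator subcase the outer conjugator is the inverse product of those elements of $\lambda$ strictly smaller, in the fixed ordering $\mu_{12}<\mu_{21}<\mu_{13}<\mu_{23}<\mu_{31}<\mu_{32}$, than the second entry $\mu_{kl}$ of the bracket. For $[\mu_{13}^{-1},\mu_{12}^{-1}]$ nothing precedes $\mu_{12}$, forcing a trivial conjugator and giving only the bare commutator as in (1). For $[\mu_{23}^{-1},\mu_{21}^{-1}]$ only $\mu_{12}$ can precede $\mu_{21}$, giving $a \in \{1,\mu_{12}\}$ as in (2). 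For $[\mu_{32}^{-1},\mu_{31}^{-1}]$ the whole set $\{\mu_{12},\mu_{21},\mu_{13},\mu_{23}\}$ can precede $\mu_{31}$, and each of its sixteen subsets yields a legitimate Schreier word, producing all values of $b$ listed in (3). The main technical obstacle will be the patient bookkeeping in the second paragraph: ruling the many ``squared'', ``self-product'' and ``longer commutator'' forms out of $IA_3$-membership requires a number of concrete matrix computations, but each reduces to showing that a single $L_i^j$ entry is nonzero, entirely in the spirit of the sample calculation the paper has already exhibited.
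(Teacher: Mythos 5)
Your strategy is essentially the paper's: the same abelianisation/$L_i^j$ criterion, the same appeal to \lemref{lemconj}, and a form-by-form matrix check of the Schreier generators. Your reorganisation of the single-commutator case --- a commutator $[\mu_{ij}^{-1},\mu_{kl}^{-1}]$ lies in $IA_3$ iff $t_{ij}(2)$ and $t_{kl}(2)$ commute, the commuting pairs being the three row pairs and the three column pairs, with the column pairs discarded because $[\mu_{ij},\mu_{kj}]=1$ already holds in $E\Pi A_3$ --- is a cleaner route to the three row-pair commutators than the paper's subcase analysis in (B), and your final enumeration of the admissible conjugators correctly reproduces the lists in (1)--(3).

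The gap is your blanket claim that every ``longer commutator'' form has nontrivial image in ${\rm GL}(3,\mathbb{Z})$. This is false for the Schreier generators as a whole: for example $S_{\mu_{23}\mu_{31},\,\mu_{21}}=[\mu_{31}^{-1}\mu_{23}^{-1},\mu_{21}^{-1}]$ abelianises to $[t_{31}(-2)t_{23}(-2),\,t_{21}(-2)]=I$, since $t_{31}$ and $t_{23}$ each commute with $t_{21}$; this generator therefore lies in $IA_3$. It is consistent with the statement only because the relation $[\mu_{31},\mu_{21}]=1$ collapses it in the group to $[\mu_{23}^{-1},\mu_{21}^{-1}]$, i.e.\ to type (2) --- an identification your argument never makes: your closing enumeration ranges only over the syntactically single-commutator subcases, while your matrix check, taken at face value, would wrongly exclude this generator from $IA_3$. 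If you instead confine the matrix check to $|\lambda|=5$ (where the four longer commutators that actually occur are indeed nontrivial), then the whole burden for such generators falls on the unproved reduction ``conjugates of the lower-level generators appear at level $5$,'' which in any case does not identify them with elements of the list; note also that your own inclusions $[\mu_{13}^{-1},\mu_{12}^{-1}]$ and $[\mu_{23}^{-1},\mu_{21}^{-1}]^{a^{-1}}$ only occur at levels $1$ and $2$, so the exclusion argument must be valid at every level, not just level $5$. The paper handles exactly this phenomenon in cases (D)(1)(d), (D)(4)(d) and the $j_5=l$ reduction in (F): it first uses the commutativity relations of $E\Pi A_3$ to strip from the left entry of the bracket any factor commuting with the right entry, reducing the longer commutator to a shorter one, and only then tests what remains. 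You need that step; without it the ``only if'' direction is not established.
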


\begin{proof}
Let $S_{\lambda, a}$ denote a generator corresponding to the case $|\lambda| =5$. In view of Lemma \ref{lemconj}, we can take $S_{\lambda, a}$ to be without any conjugation.

\textbf{(A) $\mu_{i_5j_5}=\mu_{kl}$.} In this case, $S_{\lambda, a}= \mu_{ij}^2$ and $L_i^j (S_{\lambda, a}) \neq 0$. Therefore, $S_{\lambda, a}$ is not an  $IA$-automorphism.

\textbf{(B) $ \mu_{i_4j_4} < \mu_{kl} < \mu_{i_5j_5}$.} For simplicity, take $S_{\lambda, a}= \null [ \mu_{ij}^{-1}, \mu_{kl}^{-1} ]$, where $ \mu_{kl} < \mu_{ij}$. We have the following two cases depending on the cardinality of the set $\{ i, j, k, l \}$.

\begin{enumerate}
\item If $|\{ i, j, k, l \}| = 2$, then we have the following two subcases:
\begin{enumerate}
\item If $i = k$ and $j = l$, then $[\mu_{ij}^{-1}, \mu_{ij}^{-1}] = 1$.
\item If $i = l$ and $j = k$, then $L_j^i ([\mu_{ij}^{-1}, \mu_{ji}^{-1}]) \neq 0$.
\item[] Therefore, this automorphism is not an $IA$-automorphism.
\end{enumerate}
\item If  $|\{ i, j, k, l \}| = 3$, then we have the following four subcases:
\begin{enumerate}
\item If $i = l$, then $L_k^j ([\mu_{ij}^{-1}, \mu_{ki}^{-1}]) \neq 0$. 
\item If $j = k$, then $L_i^l ([\mu_{ij}^{-1}, \mu_{jl}^{-1}]) \neq 0$.
\item[] Therefore, the above automorphisms are not $IA$-automorphisms.
\item If $j = l$, then $[\mu_{ij}^{-1}, \mu_{kj}^{-1}] = 1$.
\item If $i = k$, then $[\mu_{ij}^{-1}, \mu_{il}^{-1}]$ sends $x_i$ to $x_j x_l x_j^{-1} x_l^{-1} x_i x_l^{-1} x_j^{-1} x_l x_j$ and fixes all other generators. Therefore, $S_{\lambda, a}$ is an $IA$-automorphism.
\end{enumerate}
\end{enumerate}

\textbf{(C) $ \mu_{i_4j_4} = \mu_{kl} < \mu_{i_5j_5}$.} For simplicity, take $S_{\lambda, a}= \mu_{ij}\mu_{kl}\mu_{ij}\mu_{kl}^{-1}$. We have the following two cases depending on the cardinality of the set $\{ i, j, k, l \}$.

\begin{enumerate}
\item If $|\{ i,j,k,l \}| = 2$, then $S_{\lambda, a} =\mu_{ij}\mu_{ji}\mu_{ij}\mu_{ji}^{-1}$ and  $L_i^i(S_{\lambda, a}) \neq 1$. Therefore, $S_{\lambda, a}$ is not an $IA$-automorphism.
\item If $|\{ i,j,k,l \}| = 3$, then we have the following four subcases:
\begin{enumerate}
\item If $\mu_{ij} < \mu_{ik}$, then $L_i^j (S_{\lambda, a}) \neq 0$. 
\item If $\mu_{ij} < \mu_{ki}$, then $L_i^j (S_{\lambda, a}) \neq 0$.
\item If $\mu_{ij} < \mu_{j l}$, then $L_i^j (S_{\lambda, a}) \neq 0$.
\item If $\mu_{ij} < \mu_{k j}$, then $L_i^j (S_{\lambda, a}) \neq 0$. 
\item[] Therefore, $S_{\lambda, a}$ is not an $IA$-automorphism.
\end{enumerate}
\end{enumerate}

\textbf{(D)  $\mu_{i_3j_3} < \mu_{kl} < \mu_{i_4j_4}$.} Take $S_{\lambda, a}=\null [\mu_{i_5j_5}^{-1}\mu_{i_4j_4}^{-1}, \mu_{kl}^{-1}]$. Clearly, $|\{ i_4, j_4, i_5,j_5, k, l \}| = 3$. We have the following cases:

\begin{enumerate}
\item If $k=i_4$, then we have the following four subcases:
\begin{enumerate}
\item If $\mu_{kl}<\mu_{kj_4}< \mu_{l k}$, then $L_k^k (S_{\lambda, a}) \neq 1$.
\item If $\mu_{kl}<\mu_{kj_4}< \mu_{l j_4}$, then $L_{j_1}^k (S_{\lambda, a}) \neq 0$.
\item If $\mu_{kl}<\mu_{kj_4}< \mu_{j_4 k}$, then $L_l^k (S_{\lambda, a}) \neq 0$.
\item[] Therefore, $S_{\lambda, a}$ is not an $IA$ automorphism.
\item If $\mu_{kl}<\mu_{kj_4}< \mu_{j_4 l}$, then $S_{\lambda, a} = \null [\mu_{k j_4}^{-1}, \mu_{k l}^{-1}]$. Therefore, by (B)(2)(d),  $S_{\lambda, a}$ is an $IA$-automorphism.
\end{enumerate}
\item If $k=j_4$, then we have the following four subcases:
\begin{enumerate}
\item If $\mu_{kl}<\mu_{i_4 k}< \mu_{i_4 l}$, then $L_l^{i_1} (S_{\lambda, a}) \neq 0$.
\item If $\mu_{kl}<\mu_{i_4 k}< \mu_{k i_4}$, then $L_l^{i_1} (S_{\lambda, a}) \neq 0$.
\item If $\mu_{kl}<\mu_{i_4 k}< \mu_{l i_4}$, then $L_k^k (S_{\lambda, a}) \neq 1$. 
\item If $\mu_{kl}<\mu_{i_4k }< \mu_{l k}$, then $L_k^k (S_{\lambda, a}) \neq 1$.
\item[] Therefore, $S_{\lambda, a}$ is not an $IA$-automorphism.
\end{enumerate}
\item If $l=i_4$, then we have the following four subcases:
\begin{enumerate}
\item If $\mu_{kl}<\mu_{l j_4}< \mu_{l k}$, then $L_l^k (S_{\lambda, a}) \neq 0$. 
\item If $\mu_{kl}<\mu_{l j_4}< \mu_{k j_4}$, then $L_{j_1}^k (S_{\lambda, a}) \neq 0$.
\item If $\mu_{kl}<\mu_{l j_4}< \mu_{j_4 k}$, then $L_{j_1}^k (S_{\lambda, a}) \neq 0$.
\item If $\mu_{kl}<\mu_{l j_4}< \mu_{j_4 l}$, then $L_{j_1}^k (S_{\lambda, a}) \neq 0$.
\item[] Therefore, $S_{\lambda, a}$ is not an $IA$-automorphism.
\end{enumerate}
\item If $l=j_4$, then we have the following four subcases:
\begin{enumerate}
\item If $\mu_{kl}<\mu_{i_4 l}< \mu_{i_4 k}$, then $L_l^i (S_{\lambda, a}) \neq 0$.
\item If $\mu_{kl}<\mu_{i_4 l}< \mu_{l i_4}$, then $L_i^k (S_{\lambda, a}) \neq 0$. 
\item If $\mu_{kl}<\mu_{i_4 l}< \mu_{l k}$, then $S_{\lambda, a}= \null [\mu_{l k}^{-1}, \mu_{k l}^{-1}]^{\mu_{i_4 l}^{-1}}$. By (B)(1), $\null [\mu_{l k}^{-1}, \mu_{k l}^{-1}]$ is not an $IA$-automorphism.
\item[] Therefore, $S_{\lambda, a}$ is not an $IA$-automorphism.
\item If $\mu_{kl}<\mu_{i_4 l}< \mu_{k i_4}$, then $S_{\lambda, a}= \null [\mu_{k i_1}^{-1}, \mu_{k l}^{-1}]^{\mu_{i_1 l}^{-1}}$. By (B)(2)(d), $\null [\mu_{k i_1}^{-1}, \mu_{k l}^{-1}]$ is an $IA$-automorphism.
\end{enumerate}
\item If $k=j_4$ and $l=i_4$, then we have the following four subcases:
\begin{enumerate}
\item If $\mu_{kl}<\mu_{l k}< \mu_{k j_5}$, then $L_l^l (S_{\lambda, a}) \neq 1$.
\item If $\mu_{kl}<\mu_{l k}< \mu_{l j_5}$, then $L_{j_5}^k (S_{\lambda, a}) \neq 0$. 
\item If $\mu_{kl}<\mu_{l k}< \mu_{i_5 k}$, then $L_k^{i_5} (S_{\lambda, a}) \neq 0$. 
\item If $\mu_{kl}<\mu_{l k}< \mu_{i_5 l}$, then $L_l^l (S_{\lambda, a}) \neq 1$. 
\item[] Therefore, $S_{\lambda, a}$ is not an $IA$-automorphism.
\end{enumerate}
\end{enumerate}

\textbf{(E)  $\mu_{i_3 j_3} = \mu_{k l}$.} Take $S_{\lambda, a}=\mu_{i_3j_3}\mu_{i_3j_3}^{\mu_{i_5j_5}^{-1}\mu_{i_4j_4}^{-1}}$. Clearly, $|\{ i_3, j_3, i_4,j_4, i_5, j_5 \}| = 3$. We have the following cases:

\begin{enumerate}
\item If $i_3=i_4$, then we have the following four subcases:
\begin{enumerate}
\item If $\mu_{i_3 j_3}<\mu_{i_3j_4}< \mu_{j_3 i_3}$, then $L_{i_3}^{i_3} (S_{\lambda, a}) \neq 1$.
\item If $\mu_{i_3 j_3}<\mu_{i_3 j_4}< \mu_{j_3 j_4}$, then $L_{j_3}^{i_3} (S_{\lambda, a}) \neq 0$.
\item If $\mu_{i_3 j_3}<\mu_{i_3 j_4}< \mu_{j_4 i_3}$, then $L_{j_3}^{i_3} (S_{\lambda, a}) \neq 0$.
\item If $\mu_{i_3 j_3}<\mu_{i_3 j_4}< \mu_{j_4 j_3}$, then $L_{j_3}^{i_3} (S_{\lambda, a}) \neq 0$. 
\item[] Therefore, $S_{\lambda, a}$ is not an $IA$-automorphism.
\end{enumerate}

\item If $i_3=j_4$, then we have the following four subcases:
\begin{enumerate}
\item If $\mu_{i_3 j_3}<\mu_{i_4 i_3}< \mu_{i_3 i_4}$, then $L_{j_3}^{i_3} (S_{\lambda, a}) \neq 0$.
\item If $\mu_{i_3 j_3}<\mu_{i_4 i_3}< \mu_{i_4 j_3}$, then $\phi= \mu_{i_3 j_3} \mu_{i_3 j_3}^{\mu_{i_4 i_3}^{-1}}$. By (C), $S_{\lambda, a}$  is not an $IA$-automorphism.
\item If $\mu_{i_3 j_3}<\mu_{i_4 i_3}< \mu_{j_3 i_3}$, then $L_{j_3}^{i_3} (S_{\lambda, a}) \neq 0$.
\item If $\mu_{i_3 j_3}<\mu_{i_4 i_3}< \mu_{j_3 i_4}$, then $L_{j_3}^{i_3} (S_{\lambda, a}) \neq 0$. 
\item[] Therefore, $S_{\lambda, a}$ is not an $IA$-automorphism.
\end{enumerate}

\item If $j_3=i_4$, then we have the following four subcases:
\begin{enumerate}
\item If $\mu_{i_3 j_3}<\mu_{j_3 j_4}< \mu_{i_3 j_4}$, then $L_{j_3}^{i_3} (S_{\lambda, a}) \neq 0$.
\item If $\mu_{i_3 j_3}<\mu_{j_3 j_4}< \mu_{j_3 i_3}$, then $L_{i_3}^{i_3} (S_{\lambda, a}) \neq 1$.
\item If $\mu_{i_3 j_3}<\mu_{j_3 j_4}< \mu_{j_4 i_3}$, then $L_{j_3}^{i_3} (S_{\lambda, a}) \neq 0$.
\item If $\mu_{i_3 j_3}<\mu_{j_3 j_4}< \mu_{j_4 j_3}$, then $L_{j_3}^{i_3} (S_{\lambda, a}) \neq 0$. 
\item[] Therefore, $S_{\lambda, a}$ is not an $IA$-automorphism.
\end{enumerate}

\item If $j_3=j_4$, then we have the following four subcases:
\begin{enumerate}
\item If $\mu_{i_3 j_3}<\mu_{i_4 j_3}< \mu_{i_3 i_4}$, then $L_{j_3}^{i_3} (S_{\lambda, a}) \neq 0$.
\item If $\mu_{i_3 j_3}<\mu_{i_4 j_3}< \mu_{i_4 i_3}$, then $L_{j_3}^{i_3} (S_{\lambda, a}) \neq 0$.
\item If $\mu_{i_3 j_3}<\mu_{i_4 j_3}< \mu_{j_3 i_3}$, then $L_{i_3}^{i_3} (S_{\lambda, a}) \neq 1$.
\item If $\mu_{i_3 j_3}<\mu_{i_4 j_3}< \mu_{j_3 i_4}$, then $L_{i_4}^{i_3} (S_{\lambda, a}) \neq 0$. 
\item[] Therefore, $S_{\lambda, a}$ is not an $IA$-automorphism.
\end{enumerate}

\item If $i_3=j_4$ and $j_3=i_4$, then we have the following four subcases:
\begin{enumerate}
\item If $\mu_{i_3 j_3}<\mu_{j_3 i_3}< \mu_{i_3 j_5}$, then $L_{i_3}^{i_3} (\phi) \neq 1$.
\item If $\mu_{i_3 j_3}<\mu_{j_3 i_3}< \mu_{j_3 j_5}$, then $L_{j_5}^{i_3} (\phi) \neq 0$.
\item If $\mu_{i_3 j_3}<\mu_{j_3 i_3}< \mu_{i_5 i_3}$, then $L_{i_3}^{i_3} (\phi) \neq 1$.
\item If $\mu_{i_3 j_3}<\mu_{j_3 i_3}< \mu_{i_5 j_3}$, then $L_{i_3}^{i_3} (\phi) \neq 1$. 
\item[] Therefore, $S_{\lambda, a}$ is not an $IA$-automorphism.
\end{enumerate}
\end{enumerate}

\textbf{(F)  $\mu_{i_2j_2} < \mu_{kl} < \mu_{i_3j_3}$.} Take $S_{\lambda, a}=\null [\mu_{i_5j_5}^{-1} \mu_{i_4j_4}^{-1}\mu_{i_3j_3}^{-1},\mu_{kl}^{-1}]$. There are a total of 15 cases. When $j_5=l$, using the defining relations in $E \Pi A_n$, we see that $S_{\lambda, a}=\null [\mu_{i_4j_4}^{-1}\mu_{i_3j_3}^{-1}, \mu_{kl}^{-1}]$. By {\textbf (D)}, we see that none of them is an $IA$-automorphism. Now suppose that $j_5 \neq l$. Then we have the following eight cases. 
\begin{enumerate}
\item If $S_{\lambda, a}=[\mu_{23}^{-1} \mu_{13}^{-1} \mu_{21}^{-1} , \mu_{12}^{-1}]$, then $L^2_2(S_{\lambda, a}) \neq 1$. 
\item If $S_{\lambda, a}=[\mu_{31}^{-1} \mu_{23}^{-1} \mu_{21}^{-1} , \mu_{12}^{-1}]$, then $L^2_2(S_{\lambda, a}) \neq 1$. 
\item If $S_{\lambda, a}=[\mu_{31}^{-1} \mu_{13}^{-1} \mu_{21}^{-1} , \mu_{12}^{-1}]$, then $L^2_3(S_{\lambda, a}) \neq 0$. 
\item If $S_{\lambda, a}=[\mu_{31}^{-1} \mu_{23}^{-1} \mu_{13}^{-1} , \mu_{12}^{-1}]$, then $L^3_3(S_{\lambda, a}) \neq 1$. 
\item If $S_{\lambda, a}=[\mu_{32}^{-1} \mu_{31}^{-1} \mu_{13}^{-1} , \mu_{21}^{-1}]$, then $L^1_3(S_{\lambda, a}) \neq 0$. 
\item If $S_{\lambda, a}=[\mu_{32}^{-1} \mu_{23}^{-1} \mu_{13}^{-1} , \mu_{21}^{-1}]$, then $L^1_2(S_{\lambda, a}) \neq 0$.
\item If $S_{\lambda, a}=[\mu_{32}^{-1} \mu_{31}^{-1} \mu_{23}^{-1} , \mu_{21}^{-1}]$, then $L^1_2(S_{\lambda, a}) \neq 0$.
\item If $S_{\lambda, a}=[\mu_{32}^{-1} \mu_{31}^{-1} \mu_{23}^{-1} , \mu_{13}^{-1}]$, then $L^3_3(S_{\lambda, a}) \neq 1$.   
\item[] Therefore, $S_{\lambda, a}$ is not an $IA$-automorphism. 
\end{enumerate}

\textbf{(G) $\mu_{i_2j_2}= \mu_{kl}$.} Take $S_{\lambda, a}=\mu_{i_2j_2} \mu_{i_2j_2}^{\mu_{i_5j_5}^{-1} \mu_{i_4j_4}^{-1}\mu_{i_3j_3}^{-1}}$. We have the following cases.
\begin{enumerate}
\item If $\mu_{i_2j_2}=\mu_{12}$, then we have the following ten subcases:
\begin{enumerate}
\item If $S_{\lambda, a}=\mu_{12} \mu_{12}^{\mu_{23}^{-1} \mu_{13}^{-1}\mu_{21}^{-1}}$, then $L_1^1 (S_{\lambda, a}) \neq 1$.
\item If $S_{\lambda, a}=\mu_{12} \mu_{12}^{\mu_{31}^{-1} \mu_{13}^{-1}\mu_{21}^{-1}}$, then $L_1^1 (S_{\lambda, a}) \neq 1$.
\item If $S_{\lambda, a}=\mu_{12} \mu_{12}^{\mu_{31}^{-1} \mu_{23}^{-1}\mu_{21}^{-1}}$, then $L_3^3 (S_{\lambda, a}) \neq 1$.
\item If $S_{\lambda, a}=\mu_{12} \mu_{12}^{\mu_{31}^{-1} \mu_{23}^{-1}\mu_{21}^{-1}}$, then $L_3^3 (S_{\lambda, a}) \neq 1$.
\item If $S_{\lambda, a}=\mu_{12} \mu_{12}^{\mu_{31}^{-1} \mu_{23}^{-1}\mu_{13}^{-1}}$, then $L_3^3 (S_{\lambda, a}) \neq 1$.
\item[] Therefore, $S_{\lambda, a}$ is not an $IA$ automorphism.
\item If $S_{\lambda, a}=\mu_{12} \mu_{12}^{\mu_{32}^{-1} \mu_{23}^{-1}\mu_{21}^{-1}}=\mu_{12} \mu_{12}^{\mu_{23}^{-1}\mu_{21}^{-1}}$.
\item If $S_{\lambda, a}=\mu_{12} \mu_{12}^{\mu_{32}^{-1} \mu_{13}^{-1}\mu_{21}^{-1}}=\mu_{12} \mu_{12}^{\mu_{13}^{-1}\mu_{21}^{-1}}$.
\item If $S_{\lambda, a}=\mu_{12} \mu_{12}^{\mu_{32}^{-1} \mu_{23}^{-1}\mu_{13}^{-1}}=\mu_{12} \mu_{12}^{\mu_{23}^{-1}\mu_{13}^{-1}}$.
\item If $S_{\lambda, a}=\mu_{12} \mu_{12}^{\mu_{32}^{-1} \mu_{31}^{-1}\mu_{13}^{-1}}=\mu_{12} \mu_{12}^{\mu_{31}^{-1}\mu_{13}^{-1}}$.
\item If $S_{\lambda, a}=\mu_{12} \mu_{12}^{\mu_{32}^{-1} \mu_{31}^{-1}\mu_{23}^{-1}}=\mu_{12} \mu_{12}^{\mu_{31}^{-1}\mu_{23}^{-1}}$.
\item[] In the above cases, $S_{\lambda, a}$ is not an $IA$ automorphism by (E).
\end{enumerate}

\item  If $\mu_{i_2j_2}=\mu_{21}$, then we have the following four subcases:
\begin{enumerate}
\item If $S_{\lambda, a}=\mu_{21} \mu_{21}^{\mu_{32}^{-1} \mu_{23}^{-1}\mu_{13}^{-1}}$, then $L_3^3 (S_{\lambda, a}) \neq 1$.
\item If $S_{\lambda, a}=\mu_{21} \mu_{21}^{\mu_{32}^{-1} \mu_{31}^{-1}\mu_{13}^{-1}}$, then $L_1^2 (S_{\lambda, a}) \neq 0$.
\item If $S_{\lambda, a}=\mu_{21} \mu_{21}^{\mu_{32}^{-1} \mu_{31}^{-1}\mu_{23}^{-1}}$, then $L_1^3 (S_{\lambda, a}) \neq 0$.
\item[] Therefore, $S_{\lambda, a}$ is not an $IA$ automorphism.
\item If $S_{\lambda, a}=\mu_{21} \mu_{21}^{\mu_{31}^{-1} \mu_{23}^{-1}\mu_{13}^{-1}}= \mu_{21} \mu_{21}^{\mu_{23}^{-1}\mu_{13}^{-1}}$, then $S_{\lambda, a}$ is not an $IA$ automorphism by (E).
\end{enumerate}

\item If $\mu_{i_2j_2}=\mu_{13}$, then $S_{\lambda,a}= \mu_{13} \mu_{13}^{\mu_{32}^{-1} \mu_{31}^{-1}\mu_{23}^{-1}}$ and $L_1^1 (S_{\lambda, a}) \neq 1$. Therefore, $S_{\lambda, a}$ is not an $IA$ automorphism.
\end{enumerate}

\textbf{(H) $\mu_{i_1j_1} <\mu_{kl} < \mu_{i_2j_2}$.} Take $S_{\lambda, a}=\null [\mu_{i_5j_5}^{-1} \mu_{i_4j_4}^{-1} \mu_{i_3j_3}^{-1} \mu_{i_2j_2}^{-1}, \mu_{kl}^{-1} ]$. We have the following six cases:

\begin{enumerate}
\item If $S_{\lambda, a}= \null [\mu_{32}^{-1} \mu_{31}^{-1} \mu_{23}^{-1} \mu_{13}^{-1}, \mu_{21}^{-1} ]$, then $L_1^3 (S_{\lambda, a}) \neq 0$.
\item If $S_{\lambda, a}= \null [\mu_{32}^{-1} \mu_{31}^{-1} \mu_{23}^{-1} \mu_{13}^{-1}, \mu_{12}^{-1} ]$, then $L_{3}^{3} (S_{\lambda, a}) \neq 1$.
\item If $S_{\lambda, a}= \null [\mu_{32}^{-1} \mu_{31}^{-1} \mu_{23}^{-1} \mu_{21}^{-1}, \mu_{12}^{-1} ]$, then $L_{3}^{3} (S_{\lambda, a}) \neq 1$.
\item If $S_{\lambda, a}= \null [\mu_{32}^{-1} \mu_{31}^{-1} \mu_{13}^{-1} \mu_{21}^{-1}, \mu_{12}^{-1} ]$, then $L_{2}^{3} (S_{\lambda, a}) \neq 0$. 
\item If $S_{\lambda, a}= \null [\mu_{32}^{-1} \mu_{23}^{-1} \mu_{13}^{-1} \mu_{21}^{-1}, \mu_{12}^{-1} ]$, then $L_{2}^{2} (S_{\lambda, a}) \neq 1$. 
\item If $S_{\lambda, a}= \null [\mu_{32}^{-1} \mu_{23}^{-1} \mu_{13}^{-1} \mu_{21}^{-1}, \mu_{12}^{-1} ]$, then $L_{3}^{3} (S_{\lambda, a}) \neq 1$. 
\item[] Therefore, $S_{\lambda, a}$ is not an $IA$ automorphism.
\end{enumerate}

\textbf{(I)  $\mu_{i_1j_1}=\mu_{kl}$.} In this case, $S_{\lambda,a}=\mu_{i_1j_1} \mu_{i_1j_1}^{\mu_{i_5j_5}^{-1} \mu_{i_4j_4}^{-1}\mu_{i_3j_3}^{-1} \mu_{i_2j_2}^{-1}}$. We have the following six cases:
\begin{enumerate}
\item If $S_{\lambda,a}=\mu_{12} \mu_{12}^{\mu_{31}^{-1} \mu_{23}^{-1}\mu_{13}^{-1} \mu_{21}^{-1}}$,  then $L_3^1 (S_{\lambda,a}) \neq 0$.
\item If $S_{\lambda,a}=\mu_{12} \mu_{12}^{\mu_{32}^{-1} \mu_{23}^{-1}\mu_{13}^{-1} \mu_{21}^{-1}}$,  then $L_1^1 (S_{\lambda,a}) \neq 1$.
\item If $S_{\lambda,a}=\mu_{12} \mu_{12}^{\mu_{32}^{-1} \mu_{31}^{-1}\mu_{13}^{-1} \mu_{21}^{-1}}$,  then $L_1^1 (S_{\lambda,a}) \neq 1$.
\item If $S_{\lambda,a}=\mu_{12} \mu_{12}^{\mu_{32}^{-1} \mu_{31}^{-1}\mu_{23}^{-1} \mu_{21}^{-1}}$,  then $L_1^1 (S_{\lambda,a}) \neq 1$.
\item If $S_{\lambda,a}=\mu_{12} \mu_{12}^{\mu_{32}^{-1} \mu_{31}^{-1}\mu_{23}^{-1} \mu_{13}^{-1}}$,  then $L_3^2 (S_{\lambda,a}) \neq 0$.
\item If $S_{\lambda,a}=\mu_{21} \mu_{21}^{\mu_{32}^{-1} \mu_{31}^{-1}\mu_{23}^{-1} \mu_{13}^{-1}}$,  then $L_3^3 (S_{\lambda,a}) \neq 1$.
\item[] Therefore, $S_{\lambda, a}$ is not an $IA$ automorphism.
\end{enumerate}

\textbf{(J)  $\mu_{kl} < \mu_{i_1j_1}$.} In this case, $S_{\lambda, a}= \null [\mu_{32}^{-1}\mu_{31}^{-1}\mu_{23}^{-1}\mu_{13}^{-1}\mu_{21}^{-1}, \mu_{12}^{-1}]$ and $L_1^1 (S_{\lambda, a}) \neq 1$. Therefore, $S_{\lambda, a}$ is not an $IA$ automorphism.

Hence, the only generators  $S_{\lambda, a}$ that lie in $IA_3$ are $\null [\mu_{13}^{-1}, \mu_{12}^{-1}]$, $\null [\mu_{23}^{-1}, \mu_{21}^{-1}]^{a^{-1}}$ for $a \in \{1, \mu_{12}\}$ and $\null [\mu_{32}^{-1}, \mu_{31}^{-1}]^{b^{-1}}$ for $b \in \{\mu_{12}^{\epsilon_1} \mu_{21}^{\epsilon_2} \mu_{13}^{\epsilon_3}\mu_{23}^{\epsilon_4}~|~ \epsilon_i=0,1\}$. This completes the proof of the theorem.
\end{proof}

We now prove the main theorem of this section.\\

\textbf{Proof of \thmref{ptg}.}
Let $n \geq3$. Clearly, $IA_n \cap E \Pi A_n' \subset IA_n \cap \Pi A_n= PI_n$. In \cite{Fullarton}, Fullarton showed that $PI_n$ is normally generated in $\Pi A_n$ by the automorphisms $\null [\mu_{12}, \mu_{13}]$ and $\mu= \big(\mu_{23}\mu_{13}^{-1}\mu_{31}\mu_{32}\mu_{12}\mu_{21}^{-1}\big)^2$. It is easy to see that 

\begin{equation*}
\begin{split}
\mu = &~ S_{\mu_{13}\mu_{23},\mu_{13}}^{-1} S_{\mu_{13}\mu_{23}\mu_{31}\mu_{32}, \mu_{12}} S_{\mu_{12}\mu_{21}\mu_{13}\mu_{23} \mu_{31} \mu_{32}, \mu_{21}}^{-1} S_{\mu_{12}\mu_{21}\mu_{13}\mu_{23} \mu_{31} \mu_{32}, \mu_{23} } \\
~& S_{\mu_{12}\mu_{21}\mu_{31}\mu_{32}, \mu_{13}}^{-1}  S_{\mu_{12}\mu_{21}\mu_{31}\mu_{32}, \mu_{31}} S_{\mu_{12}\mu_{21}\mu_{32}, \mu_{32}} S_{\mu_{12}\mu_{21},\mu_{12}}.\\
\end{split}
\end{equation*}
It follows that $\null [\mu_{12}, \mu_{13}]$ and $\mu$ lie in $E \Pi A_3'$ and hence they also lie in $IA_n \cap E \Pi A_n'$. Note that, $IA_n$ is a normal subgroup of $Aut(F_n)$. Since   $E \Pi A_n'$ is a characteristic subgroup of $E \Pi A_n$ and  $E \Pi A_n$ is a normal subgroup of $\Pi A_n$, it follows that  $E \Pi A_n'$ is a normal subgroup of  $\Pi A_n$. Consequently, $\null [\mu_{12}, \mu_{13}]^{\sigma}$ and $\mu^{\sigma}$ lie in $IA_n \cap E \Pi A_n'$ for all $\sigma \in \Pi A_n$. Hence $IA_n \cap \Pi A_n \subset IA_n \cap E \Pi A_n'$. This proves the theorem. $\Box$

\bigskip

We conclude with the following questions, answers to which,  will shed more light on the structure of the palindromic automorphism group $\Pi A_n$ of $F_n$.

\begin{problem}
Is it possible to find defining relations for $E\Pi A_n'$?
\end{problem}

\begin{problem}
Is it possible to find a decomposition of $E\Pi A_n$ as a semi-direct product, a free product or an HNN extension of some groups?
\end{problem}

\begin{problem}
In view of Fullarton \cite{Fullarton1}, is there any geometrical or topological interpretation of Theorem \ref{ptg}?
\end{problem}

\medskip \noindent \textbf{Acknowledgement.} The authors are grateful to the anonymous referee for comments and suggestions which improved the paper. The authors gratefully acknowledge the support from the DST-RFBR project DST/INT/RFBR/P-137 and RFBR-13-01-92697. Bardakov is partially supported by Laboratory of Quantum Topology of Chelyabinsk State University via Russian Federation government grant 14.Z50.31.0020. Gongopadhyay is partially supported by NBHM grant NBHM/R. P.7/2013/Fresh/992. Singh is also supported by DST INSPIRE Scheme IFA-11MA-01/2011 and DST SERC Fast Track Scheme SR/FTP/MS-027/2010.

\bigskip

\end{document}